\begin{document}
\title{\bf Differential Stability of Convex Discrete Optimal Control Problems}
\author {Duong Thi Viet An \and Nguyen Thi Toan}

\institute{Duong Thi Viet An \at Department of Mathematics and Informatics, College of Sciences,
		Thai Nguyen University, Thai Nguyen city, Vietnam\\
 \email{andtv@tnus.edu.vn}\\
 \\
 Nguyen Thi Toan \at School of Applied Mathematics and Informatics, Hanoi University of Science and Technology, 1 Dai Co Viet, Hanoi, Vietnam\\
   \email{toan.nguyenthi@hust.edu.vn}}
 
\date{Received: date / Accepted: date}

\titlerunning{Convex Discrete Optimal Control Problems}

\maketitle

\begin{abstract}

Differential stability of convex discrete optimal control problems in Banach spaces is studied in this paper. By using some recent results of An and Yen [Appl. Anal. 94, 108--128 (2015)] on differential stability of parametric convex optimization problems under inclusion constraints, we obtain an upper estimate for the subdifferential of the optimal value function of a parametric convex discrete optimal control problem, where the objective function may be nondifferentiable. If the objective function is differentiable, the obtained upper estimate becomes an equality. It is shown that the singular subdifferential of the just mentioned optimal value function always consists of the origin of the dual space.

\keywords {Parametric convex discrete optimal control
problem \and Optimal value function \and Subdifferentials \and Linear operator with closed range \and Adjoint operator.}

\subclass{93C55 \and 93C73 \and 49K40 \and  49J53 \and  90C31 \and  90C25}
\end{abstract}

\section{Introduction}
\label{intro}
Discrete  optimal control problems  (or  optimal control problems with discrete  time) arise when one has deal with controlled systems in which changes of the control and  current state can take place only at  strictly defined,  isolated instants of time.  

\medskip
Differential stability of parametric optimization problems is an important topic in variational analysis and optimization. In \cite{MordukhovichNamYen2009}, Mordukhovich, Nam and Yen gave
formulas for computing and estimating the Fr\'echet subdifferential, the Mordukhovich subdifferential, and the singular subdifferential of the optimal value function in parametric mathematical programming problems under inclusion constraints. If the problem in question is convex, by using the Moreau-Rockafellar
theorem and appropriate regularity conditions, An and Yao \cite{AnYao}, An and Yen \cite{AnYen} have obtained formulas for
computing subdifferentials of the optimal value function. In some sense, the results of \cite{AnYao} and \cite{AnYen} show that the preceding results of \cite{MordukhovichNamYen2009} admit a simpler form where several assumptions used in the general nonconvex case can be dropped.

\medskip
Besides the study on differential stability of  parametric mathematical programming problems, the study on differential stability of optimal control problems is also an issue of importance (see e.g. \cite{Chieu_Kien_Toan,Chieu_Yao,Kien_Liou_Wong_Yao}, \cite{Mouss1,TRk1,Seeger,Toan_Thuy,Toan_2015,Toan_Kien,Toan_Yao,Tu,BV1} and the references therein).

\medskip
Following the recent work of Chieu and Yao \cite{Chieu_Yao}, Toan and Yao \cite{Toan_Yao},
 this paper presents some new results on differential stability of convex discrete optimal control problems. Due to the convexity of the problem under our investigation, the results in \cite{AnYen} can be effectively used to yield an upper estimate for the subdifferential of the optimal value function of a parametric convex discrete optimal control problem, where the objective function may be nondifferentiable. If the objective function is differentiable, the obtained upper estimate becomes an equality. It is shown that the singular subdifferential of the just mentioned optimal value function always consists of the origin of the dual space. Our assumptions are weaker than those in
 \cite{Chieu_Yao} and \cite{Toan_Yao} applied to the convex case. In addition,  instead of the finite-dimensional spaces setting in those papers, here we can use a Banach spaces setting.

\medskip
The contents of the remaining sections are as follows. Section 2 formulates the control problem and recalls some auxiliary results from \cite{AnYen,Bonnans_Shapiro_2000,Ioffe_Tihomirov_1979,Mordukhovich_2006}. Differential stability of a specific mathematical programming problem is studied in Section~3 by invoking tools from functional analysis and infinite-dimensional convex analysis. Section~4 establishes three theorems on estimating/computing subdifferentials of the optimal value function of the parametric convex discrete control problem. The last section shows how these theorems can be used for analyzing concrete problems.

\section{Problem Formulation and Auxiliary Results} \markboth{\centerline{\it Preliminaries}} {\centerline{\it D. T. V.~An and N. T.~Toan
	}} \setcounter{equation}{0}

This section is divided into four subsections. The first one introduces the \textit{convex discrete optimal control problem} that we are interested in. The second one transforms the problem to a parametric convex optimization problem under an inclusion constraint. The third one recalls several basic concepts from variational analysis and the last one gives some facts from functional analysis and convex analysis, which are needed for studying the above convex discrete optimal control problem.

\subsection{Control problem}
 	
Let $X_k$, $U_k$, $W_k$, for $k=0, 1,\ldots, N-1$ and $X_N$, be Banach spaces, where $N$ is a positive natural number. Let there be given

- convex sets  $\Omega_0\subset U_0, \dots, \Omega_{N-1}\subset U_{N-1}$,  and $C\subset X_0$;

- continuous linear operators $A_k:X_k\to X_{k+1},\ B_k:U_k\to X_{k+1},\ T_k: W_k\to X_{k+1}$, for $k=0, 1,\ldots, N-1$;

- functions $h_k: X_k\times U_k \times W_k\to \mathbb{R}$ for $k=0, 1,\ldots, N-1$, and $h_N: X_N \to \mathbb{R}$, which are convex.

\medskip
We are going to describe a control system where the state variable (resp., the control variable) at time $k$ is $x_k$ (resp., $u_k$), and the objective function is the sum of the functions $h_k$, for $k=0, 1,\ldots, N$. We interpret $X_k$ as the space of state variables at stage $k$, and $U_k$ (resp., $W_k$) the space of control variables (resp., space of random parameters) at stage $k$.

\medskip
Put  $W= W_0\times W_1\times\cdots\times W_{N-1}$. For every vector 
 $w=(w_0, w_1, \ldots, w_{N-1})\in W$, consider the following \textit{convex discrete optimal control problem}: Find a pair $(x,u)$ where $x=(x_0, x_1,\ldots,x_N)\in
 X_0\times X_1\times \cdots\times X_N$ is a trajectory and $u=(u_0,
 u_1,\ldots, u_{N-1})\in U_0\times U_1\times \cdots\times U_{N-1}$ is a control sequence, which minimizes the \textit{objective function}
 \begin{eqnarray}\label{problem1}
 \sum_{k=0} ^{N-1} h_k(x_k, u_k, w_k)+ h_N(x_N)
 \end{eqnarray} and satisfies the \textit{linear state equations}
 \begin{eqnarray}x_{k+1}= A_k x_k +B_k u_k +  T_k w_k,\ k=0, 1,\ldots, N-1,
 \end{eqnarray} 
 the \textit{initial condition}
 \begin{eqnarray}
 x_0\in  C,
 \end{eqnarray}  
 and the \textit{control constraints}
 \begin{eqnarray}\label{problem4}
 u_k\in \Omega_k\subset U_k, \ \; k=0,1,\ldots, N-1.
 \end{eqnarray}
 
 A classical example for the problem \eqref{problem1}--\eqref{problem4} is the \textit{inventory control problem} in economics, where $x_k$ plays a stock available at the beginning of the $k$th period, $u_k$ plays a stock ordered (and immediately delivered) at the beginning of the $k$th period, $w_k$ is the demand during the $k$th period (in practice, $w_0,\dots,w_{N-1}$ are independent random variables with a given probability distribution), and the objective function has the form $h(x_N)+\displaystyle\sum_{k=0}^{N-1}h(x_k,u_k,w_k)$
   together with the state equation $x_{k+1}=x_k+u_k-w_k$ (see \cite[pp. 2--6, 13--14, 162--168]{Bertsekas} for details).
 
  \subsection{Reduction to a parametric optimization problem}
  
   Put  $X= X_0\times X_1\times\cdots\times X_N,\,U= U_0\times
   U_1\times\cdots\times U_{N-1}$. For every parameter $w=(w_0, w_1,\ldots, w_{N-1})\in W$, denote by $V(
   w)$ the optimal value of problem \eqref{problem1}--\eqref{problem4}, and by $S(w)$ the solution set of that problem. Thus, $V: W\to
   \mathbb{\overline R}$ is an extended real-valued function which is
   called \textit{the optimal value function} of problem \eqref{problem1}--\eqref{problem4}. It is
   assumed that $V$ is finite at a certain parameter $\bar w=(\bar w_0, \bar
   w_1,\ldots,\bar w_{N-1})\in W$ and $(\bar x, \bar u)$ is a
   solution of \eqref{problem1}--\eqref{problem4}, that
   is $(\bar x, \bar u)\in
   S(\bar w)$ where $\bar x=(\bar x_0, \bar x_1,\ldots, \bar x_N), \bar u=(\bar u_0, \bar u_1,\ldots, \bar
   u_{N-1})$.

\medskip   
   For each $w=(w_0, w_1,\ldots, w_{N-1})\in W$, let
   \begin{eqnarray}\nonumber f(x, u, w)=
   \sum_{k=0} ^{N-1} h_k(x_k, u_k, w_k)+h_N(x_N).
   \end{eqnarray}
   Then, setting $ \Omega=
   \Omega_0\times\Omega_1\times\cdots\times\Omega_{N-1},\
   \widetilde{X}=X_1\times X_2\times\cdots\times X_N$, and  $$G(w)=\{(x,u)\in X \times U\, | \, x_{k+1}= A_k x_k +B_k u_k +  T_k w_k,\ k=0, 1,\ldots, N-1\},$$ we have $$V(w)=\inf_{(x,u)\in G(w)\cap (C\times
   	\widetilde{X}\times\Omega)} f(x,u,w).$$ 
     
  \subsection{Three dual constructions}
   
  We will need three dual constructions: {\it normal cone} to convex sets, \textit{subdifferential and singular subdifferential}  of convex functions, and \textit{coderivative} of convex multifunctions. Let $X$ and $Y$ be Hausdorff locally convex topological vector spaces with the topological duals denoted, respectively, by $X^*$ and $Y^*$. For a convex set $\Omega\subset X$, the {\it normal cone} of $\Omega$ at $\bar x\in \Omega$ is given by \begin{align}\label{normals_convex_analysis} N(\bar x; \Omega)=\{x^*\in X^* \mid \langle x^*, x-\bar x \rangle \leq 0, \ \, \forall x \in \Omega\}.\end{align} 
  
  Let $f: X\rightarrow \overline{\mathbb{R}}$, where $\overline{\mathbb{R}}=[- \infty, + \infty]$, be an extended real-valued function.  One says that $f$ is \textit{proper} if the \textit{domain}
  ${\rm{dom}}\, f:=\{ x \in X \mid f(x) < +\infty\} $
  is nonempty, and if $f(x) > - \infty$ for all $x \in X$. The \textit{epigraph} of $f$ is the set $ {\rm{epi}}\, f:=\{ (x, \alpha) \in X \times \mathbb{R} \mid \alpha \ge f(x)\}$. If the latter set is convex, then $f$ is said to be a \textit{convex function.}
  
  \medskip
  The {\it subdifferential} of a proper convex function $f: X\rightarrow \overline{\mathbb{R}}$ at a point $\bar x \in {\rm dom}\,f$ is defined by
  \begin{align}\label{subdifferential_convex_analysis}
  \partial f(\bar x)=\{x^* \in X^* \mid \langle x^*, x- \bar x \rangle \le f(x)-f(\bar x), \ \forall x \in X\}. 
  \end{align} Note that $x^* \in \partial f(\bar x)$ if and only if $\langle x^*, x-\bar x \rangle -(\alpha- f(\bar x)) \le 0$ for all $(x, \alpha) \in {\rm epi}\,f$
  or, equivalently, $(x^*,-1)\in N( (\bar x, f(\bar x)); {\rm epi}\, f)$. Therefore, \begin{align*}
  \partial f(\bar x)=\{x^* \in X^* \mid (x^*,-1)\in N ( (\bar x, f(\bar x)); {\rm{epi}}\, f)\}.
  \end{align*} The {\it singular subdifferential} of a convex function $f$ at a point $\bar x \in {\rm dom}\,f$ is given by
  \begin{align}\label{singular_subdifferential_convex}
  \partial^\infty f(\bar x)=\{x^* \in X^* \mid (x^*,0)\in N ( (\bar x, f(\bar x)); {\rm{epi}}\, f)\}.
  \end{align} 
  For any  $\bar x\notin {\rm dom}\,f$, one puts $\partial f(\bar x)=\emptyset$ and $\partial^\infty f(\bar x)=\emptyset$. It is easy to see that $\partial \delta (x; \Omega)=N(x;\Omega)$ where $\delta (\cdot;\Omega)$ is the {\it indicator function} of a convex set $\Omega \subset X$. Recall that $\delta (x; \Omega)=0$ if $x \in \Omega$ and $\delta (x; \Omega)=+\infty$ if $x \notin \Omega$. Interestingly, for any convex function $f$, one has $\partial^\infty f(x)=N(x;{\rm dom}\,f)$; see e.g. \cite{AnYen}.
  
  \medskip
  One says that a multifunction $F:X\rightrightarrows Y$ is {\it closed} (resp., {\it convex}) if ${\rm gph}\,F$ is closed (resp., convex). The \textit{coderivative} $ D^* F(\bar x, \bar y): Y^* \rightrightarrows X^*$  of a convex multifunction $F$ between $X$ and $Y$ at $(\bar x, \bar y) \in {\rm{gph}}\, F$ is the multifunction defined by
  \begin{align}\label{coderivative_convex_case}
  D^* F(\bar x, \bar y)(y^*):=\left\{x^* \in X^* \mid (x^*, -y^*) \in  N ( (\bar x, \bar y); {\rm{gph}}\, F)\right\}, \ \forall y^* \in Y^*.
  \end{align}  
  If $(\bar x, \bar y) \notin {\rm{gph}}\, F$, one puts $D^* F(\bar x, \bar y)(y^*)=\emptyset$ for any $y^* \in Y^*$.
  
  \subsection{Some facts from functional analysis and convex analysis}
 
  First, we recall a result related to continuous linear operators. Let  $A:X\rightarrow Y$ be a continuous linear operator from a Banach space $X$ to another Banach space $Y$ with the  adjoint $A^*:Y^* \rightarrow X^*$. The null space and the range of $A$ are defined respectively by
  ${\rm ker}\,A=\{x\in X\, \mid \, Ax=0 \}$ and 
  $${\rm rge}\,A =\{y \in Y\, \mid \, y=Ax, \, x\in X \}.$$
  \begin{proposition} \label{proposition2.173}{\rm{(See \cite[Proposition~2.173]{Bonnans_Shapiro_2000})}}
  The following properties are valid:\\
  {\rm (i)} $(\rm ker\,A)^\bot={\rm cl}^*({\rm rge}\,(A^*))$ with
  $({\rm ker}\,A)^\bot=\{ x^*\in X^*\, \mid \, \langle x^*, x\rangle =0\ \, \forall x \in{\rm ker}\,A \}$ and ${\rm cl}^*({\rm rge}\,(A^*))$ denoting  respectively the orthogonal complement of the set $ {\rm ker}\,A$ and the closure of the set ${\rm rge}\,(A^*)$ in the weak$^*$ topology of $X^*$.\\
  {\rm (ii)} If ${\rm rge}\,A$ is closed, then $({\rm ker}\,A)^\bot={\rm rge}\,(A^*),$ and there is $c>0$ such that for every $x^*\in {\rm rge}\,(A^*)$ there exists $y^*\in Y^*$ with $||y^*|| \le c||x^*||$ and $x^*=A^*y^*$.\\
   {\rm (iii)} If, in addition, ${\rm rge}\,A=Y$, i.e., $A$ is onto, then $A^*$ is one-to-one and there exists $c>0$ such that $||y^*||\le c||A^*y^*||$, for all $y^*\in Y^*$.\\
   {\rm (iv)} $({\rm ker}\,A^*)^\bot={\rm cl( rge}\,A).$
  \end{proposition}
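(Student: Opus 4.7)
The plan is to derive all four assertions from Hahn--Banach (separation and extension) together with the open mapping theorem, treating (i) and (iv) as dual copies of the same separation argument, and (ii) and (iii) as quantitative refinements that require the open mapping theorem.

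For (i), the inclusion $\mathrm{rge}(A^*)\subseteq(\ker A)^\bot$ is the one-line adjoint computation $\langle A^*y^*,x\rangle=\langle y^*,Ax\rangle=0$ for $x\in\ker A$; since $(\ker A)^\bot$ is weak$^*$-closed (an intersection of weak$^*$-closed hyperplanes), the closure is contained as well. I would prove the reverse inclusion by contradiction: given $x^*\in(\ker A)^\bot\setminus\mathrm{cl}^*\mathrm{rge}(A^*)$, Hahn--Banach separation in the locally convex space $(X^*,w^*)$ produces a weak$^*$-continuous linear functional, i.e.\ an element $x\in X$, strictly separating $\{x^*\}$ from the subspace $\mathrm{cl}^*\mathrm{rge}(A^*)$. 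Any linear functional bounded above on a subspace vanishes on it, so $\langle y^*,Ax\rangle=0$ for all $y^*\in Y^*$, hence $Ax=0$; then $x\in\ker A$ forces $\langle x^*,x\rangle=0$, contradicting the strict separation.

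For (ii), the quantitative statement is the heart of the result. Setting $M:=\mathrm{rge}(A)$, which is a Banach space by hypothesis, the induced map $\widetilde{A}:X/\ker A\to M$ is a continuous linear bijection between Banach spaces, so the open mapping theorem yields $c>0$ with $\|[x]\|_{X/\ker A}\le c\|Ax\|$ for all $x$. Given $x^*\in(\ker A)^\bot$, define $\varphi:M\to\mathbb{R}$ by $\varphi(Ax):=\langle x^*,x\rangle$, which is well defined because $x^*$ annihilates $\ker A$; taking the infimum over $\ker A$ of $|\langle x^*,x+z\rangle|\le\|x^*\|\,\|x+z\|$ yields $|\varphi(Ax)|\le c\|x^*\|\,\|Ax\|$, so Hahn--Banach extends $\varphi$ to $y^*\in Y^*$ with $\|y^*\|\le c\|x^*\|$ and $A^*y^*=x^*$. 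Combined with (i) this gives $(\ker A)^\bot=\mathrm{rge}(A^*)$ together with the bound. For (iii), injectivity of $A^*$ is immediate (if $A^*y^*=0$ then $y^*$ vanishes on $\mathrm{rge}(A)=Y$), and the bound $\|y^*\|\le c\|A^*y^*\|$ follows from the same open mapping argument: every $y\in Y$ admits $x\in X$ with $Ax=y$ and $\|x\|\le c\|y\|$, so $|\langle y^*,y\rangle|=|\langle A^*y^*,x\rangle|\le c\|A^*y^*\|\,\|y\|$, and taking the supremum over the unit ball of $Y$ gives the claim. Finally, (iv) is the mirror of (i): the inclusion $\mathrm{rge}(A)\subseteq(\ker A^*)^\bot$ is the same adjoint identity, and if $y\notin\mathrm{cl}\,\mathrm{rge}(A)$, norm-topology Hahn--Banach separation produces $y^*\in Y^*$ vanishing on $\mathrm{rge}(A)$ (so $A^*y^*=0$, i.e.\ $y^*\in\ker A^*$) with $\langle y^*,y\rangle\ne 0$, contradicting $y\in(\ker A^*)^\bot$.

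The main obstacle is the quantitative step in (ii): one must convert the qualitative surjectivity of $\widetilde{A}$ onto the closed range into a \emph{norm-controlled} lift from $(\ker A)^\bot$ back to $Y^*$, and this is precisely where completeness and the open mapping theorem are indispensable; the other three parts are variants of a single Hahn--Banach separation argument, differing only in whether one works in the weak$^*$ topology of $X^*$ or the norm topology of $Y$.
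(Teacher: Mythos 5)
Your proof is correct, and it is the standard argument: the paper itself offers no proof of this proposition but simply cites \cite[Proposition~2.173]{Bonnans_Shapiro_2000}, and your derivation (annihilator inclusions plus weak$^*$ and norm Hahn--Banach separation for (i) and (iv), the quotient map $X/\ker A\to\mathrm{rge}\,A$ with the open mapping theorem and a Hahn--Banach extension for the quantitative parts (ii) and (iii)) is essentially the textbook proof of this closed-range material. No gaps worth flagging.
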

  
   Next, we recall two results on normal cones to convex sets. Let $A_0, A_1,\dots, A_n$ be convex subsets of a Banach space $X$ and let $A=A_0\cap A_1\cap\dots\cap A_n$. By ${\rm{int} }\, A_i$, for $i=1,\dots,n$, we denote the interior of $A_i$  in the norm topology of $X$.
  
  \begin{proposition}\label{proposition1}{\rm{(See \cite[Proposition~1, p.~205]{Ioffe_Tihomirov_1979})}} If
  	$A_0 \cap ({\rm{int} }\, A_1)\cap\dots\cap ({\rm{int}}\, A_n)\not=\emptyset$,  then
  	$$N(x;A)=N(x;A_0)+N(x;A_1)+\dots+N(x;A_n)$$
  	for any point $x\in A$. In the other words, the normal cone to the intersection of sets is equal to the sum of the normal cones to these sets. 	
  \end{proposition}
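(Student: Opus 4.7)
The plan is to reduce the identity to the classical Moreau--Rockafellar sum rule for subdifferentials of convex functions, exploiting the fact that for a convex set $C\subset X$ one has $N(x;C)=\partial\delta(\cdot;C)(x)$ and that $\delta(\cdot;A)=\sum_{i=0}^n\delta(\cdot;A_i)$ directly from the definition of intersection.

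First I would dispatch the inclusion ``$\supseteq$'', which is immediate: given $x_i^*\in N(x;A_i)$ for $i=0,1,\dots,n$, for any $y\in A\subseteq A_i$ one has $\langle x_i^*,y-x\rangle\le 0$, so summing yields $\bigl\langle \sum_i x_i^*,y-x\bigr\rangle\le 0$, placing the sum in $N(x;A)$. For the nontrivial inclusion ``$\subseteq$'', I would pick $x^*\in N(x;A)=\partial\bigl(\sum_i\delta(\cdot;A_i)\bigr)(x)$ and invoke the Moreau--Rockafellar theorem: if $f_0,\dots,f_n$ are proper convex functions on a Banach space and all but one are continuous at a common point of $\bigcap_i\mathrm{dom}\,f_i$, then $\partial(f_0+\cdots+f_n)(x)=\partial f_0(x)+\cdots+\partial f_n(x)$. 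The qualification condition is met because the hypothesis $A_0\cap(\mathrm{int}\,A_1)\cap\cdots\cap(\mathrm{int}\,A_n)\ne\emptyset$ furnishes a point at which $\delta(\cdot;A_1),\dots,\delta(\cdot;A_n)$ are all continuous (they vanish identically on the interior of their respective sets) while $\delta(\cdot;A_0)$ is finite. Translating the sum rule back via $\partial\delta(\cdot;A_i)(x)=N(x;A_i)$ then yields the claim.

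The main obstacle will be the Moreau--Rockafellar theorem itself, whose proof in a Banach space rests on a Hahn--Banach separation in $X\times\mathbb{R}$ between the epigraph of $f_0$ and a suitable convex set determined by $x^*$ and the remaining $f_i$; the interior-point hypothesis is precisely what guarantees a nonvertical separating hyperplane and thus the decomposition $x^*=x_0^*+\cdots+x_n^*$. As an alternative route that avoids quoting the abstract sum rule, I would proceed by induction on $n$ using direct separation of $A_0$ from $A_1\cap\cdots\cap A_n$ (whose interior is nonempty by the hypothesis), obtaining $x^*=x_0^*+y^*$ with $x_0^*\in N(x;A_0)$ and $y^*\in N(x;A_1\cap\cdots\cap A_n)$, and then iterating on the intersection of interiors. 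Either route hinges on the interior assumption: without it the sum rule genuinely fails, as the textbook example of two convex sets touching at a single boundary point in $\mathbb{R}^2$ shows, so the nontrivial content of the proof is really the verification that the topological hypothesis forces the separating functional to split in the required manner.
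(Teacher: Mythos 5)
Your argument is correct: the reduction to the Moreau--Rockafellar sum rule via indicator functions, with the point of $A_0\cap({\rm int}\,A_1)\cap\cdots\cap({\rm int}\,A_n)$ supplying the continuity (qualification) condition for $\delta(\cdot;A_1),\dots,\delta(\cdot;A_n)$ and the identity $\partial\delta(\cdot;A_i)(x)=N(x;A_i)$ translating the sum rule into the normal-cone formula, is the canonical proof of this result. The paper itself offers no proof---it simply quotes the proposition from Ioffe and Tihomirov---and your route is essentially the argument of that reference, with both the easy inclusion and the nontrivial one handled properly.
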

  	\begin{proposition}\label{proposition3}{\rm{(See \cite[Proposition~3, p.~206]{Ioffe_Tihomirov_1979})}}
  		If ${\rm int}\,A_i\not = \emptyset$ for $i=1,2,\dots,n$ then, for any  $x_0\in A$, the following statements are equivalent:\\
  		  {\rm (a)} $A_0 \cap ({\rm{int} }\, A_1)\cap\dots\cap ({\rm{int}}\, A_n)=\emptyset$;\\
  		  {\rm (b)} There exist $x_i^*\in N(x_0;A_i)$ for $i=0,1,\dots,n$, not all zero, such that $$x_0^*+x_1^*+\dots+x_n^*=0.$$
  	\end{proposition}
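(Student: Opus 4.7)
For (b)$\Rightarrow$(a), I would argue by contradiction. Suppose (b) holds and that there exists $\tilde x\in A_0\cap(\mathrm{int}\,A_1)\cap\cdots\cap(\mathrm{int}\,A_n)$. Since each $x_i^*\in N(x_0;A_i)$, the normal cone inequality applied at $x=\tilde x$ gives $\langle x_i^*,\tilde x-x_0\rangle\le 0$ for every $i=0,1,\ldots,n$. For $i\ge 1$, the fact that $\tilde x\in\mathrm{int}\,A_i$ yields some $\delta_i>0$ with $\tilde x+\delta_i B_X\subset A_i$; applying the normal cone inequality at the points $\tilde x+v$ with $\|v\|\le\delta_i$ and taking the supremum in $v$ sharpens this to $\langle x_i^*,\tilde x-x_0\rangle\le -\delta_i\|x_i^*\|$. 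Summing over $i$ and using $x_0^*+x_1^*+\cdots+x_n^*=0$ gives $0\le-\sum_{i=1}^n\delta_i\|x_i^*\|$, which forces $x_i^*=0$ for $i\ge 1$ and hence also $x_0^*=0$, contradicting non-triviality.

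For the reverse direction (a)$\Rightarrow$(b), the plan is a Hahn--Banach separation lifted to the product space $Y:=X^n$. I would introduce the convex sets $C_1:=A_1\times\cdots\times A_n$ and the diagonal copy $C_2:=\{(x,\ldots,x):x\in A_0\}$ in $Y$. Because $\mathrm{int}\,C_1=(\mathrm{int}\,A_1)\times\cdots\times(\mathrm{int}\,A_n)$ is nonempty by hypothesis and (a) rewrites precisely as $C_2\cap\mathrm{int}\,C_1=\emptyset$, the geometric Hahn--Banach theorem produces a nonzero continuous linear functional $y^*=(x_1^*,\ldots,x_n^*)\in(X^*)^n$ with
\[\sum_{i=1}^n\langle x_i^*,x\rangle\le\sum_{i=1}^n\langle x_i^*,a_i\rangle\quad\text{for all }x\in A_0,\ a_i\in A_i.\]
Plugging $x=x_0$ and $a_j=x_0$ for $j\ne i$ (legal because $x_0\in A\subset A_i$) isolates one coordinate and yields $-x_i^*\in N(x_0;A_i)$ for $i=1,\ldots,n$; plugging $a_i=x_0$ for every $i$ and letting $x$ range over $A_0$ gives $\sum_{i=1}^n x_i^*\in N(x_0;A_0)$. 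Setting $x_0^*:=\sum_{i=1}^n x_i^*$ and renaming $-x_i^*$ as $x_i^*$ for $i\ge 1$ supplies the required multipliers $x_i^*\in N(x_0;A_i)$, not all zero, with $\sum_{i=0}^n x_i^*=0$.

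The main obstacle is the bookkeeping of the separation in (a)$\Rightarrow$(b): the product-space construction must be chosen so that the disjointness condition rewrites cleanly as $C_2\cap\mathrm{int}\,C_1=\emptyset$ with $\mathrm{int}\,C_1\ne\emptyset$, and so that fixing all but one coordinate of $a=(a_1,\ldots,a_n)$ at the common point $x_0$ decouples the single separation inequality into the $n$ separate normal-cone inclusions. Both features hinge on the existence of a base point $x_0\in A$ lying in every $A_i$, so the fact that $x_0\in A$ (assumed in the statement) is used crucially, both to legalise the ``freezing'' substitutions and to ensure the separating functional lives in the right dual cones.
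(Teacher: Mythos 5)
This proposition is quoted in the paper from Ioffe and Tihomirov without proof, so there is no in-paper argument to compare against; judged on its own, your proof is correct. The easy direction (b)$\Rightarrow$(a) via the sharpened inequality $\langle x_i^*,\tilde x-x_0\rangle\le-\delta_i\|x_i^*\|$ on interior points is sound, and the converse is exactly the classical Dubovitskii--Milyutin-style argument used in the cited source: separate the diagonal copy of $A_0$ from the product $A_1\times\cdots\times A_n$ (legitimate since the product has nonempty interior and (a) is precisely the disjointness from that interior), then freeze all but one coordinate at the common point $x_0\in A$ to decouple the inequality into the normal-cone inclusions, with nontriviality inherited from the nonvanishing of the separating functional.
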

  	\section{Differential stability of the parametric mathematical programming problem}
  	 By using some recent results from \cite{AnYen} on differential stability of parametric convex optimization problems under inclusion constraints, this section establishes a theorem, which is the main tool for our subsequent investigations on the discrete optimal control problem. 	
  	 	\medskip
  	 	
  	 	 Let $\varphi: X \times Y \rightarrow \overline{\Bbb R}$ be a proper convex function, $G: X \rightrightarrows Y$ a convex multifunction between Hausdorff locally convex topological vector spaces. Consider the parametric optimization problem under an inclusion constraint
  	 	 \begin{align}\label{math_program}
  	 	 	\min\{\varphi(x,y)\mid y \in G(x)\}
  	 	 \end{align}
  	 	 depending on the parameter $x$, 
 	 	 with the optimal value function
 	 	   	 	 $\mu: X \rightarrow \overline{\mathbb{R}}$
  	 	 defined by
  	 	 \begin{align}\label{marginalfunction}
  	 	   	 	 	\mu(x):= \inf \left\{\varphi (x,y)\mid y \in G(x)\right\}.
  	 	   	 	 \end{align}
  	 	   	 	  The solution map $M: {\rm {dom}}\, G  \rightrightarrows Y $ of problem \eqref{math_program} is
  	 	 \begin{align}\label{solution_map}
  	 	   	 	 	M(x):=\{y \in G(x)\mid \mu(x)= \varphi (x,y)\}.
  	 	   	 	 \end{align}
  	 	 
  	 	 The problem of computing the subdifferential and singular subdifferential of $\mu(\cdot{)}$ has been considered in \cite{AnYen} (the Hausdorff locally convex topological vector spaces setting) and in \cite{AnYao} (the Banach space setting). The following result of \cite{AnYen} will be used intensively in this paper. 
  	 	 
  	 	\begin{theorem}\label{theoremAnYen}{\rm{(See \cite[Theorem 4.2]{AnYen})}}
  	 		If at least one of the following regularity conditions is satisfied\\
  	 		{\rm (i)} ${\rm{int}}({\rm gph}\,G)\cap {\rm{dom}} \, \varphi \ne\emptyset$,\\
  	 		{\rm (ii)} $\varphi$ is continuous at a point $(x^0, y^0)\in {\rm gph}\,G$,\\
  	 		then, for any $\bar x \in {\rm{dom}}\, \mu$, with $\mu(\bar x) \not= - \infty$, and for any $\bar y \in M(\bar x)$ one has
  	 		\begin{align*}
  	 			\partial \mu(\bar x) = \bigcup\limits_{(x^*,y^*) \in \partial \varphi(\bar x, \bar y)}   \big\{x^* + D^*G( \bar x, \bar y)(y^*) \big\}
  	 		\end{align*}
  	 		and
  	 		\begin{align*}
  	 			\partial^\infty \mu(\bar x) = \bigcup\limits_{(x^*,y^*) \in \partial^\infty \varphi(\bar x, \bar y)}   \big\{x^* + D^*G( \bar x, \bar y)(y^*) \big\}.
  	 		\end{align*}
  	 	\end{theorem}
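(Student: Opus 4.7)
The plan is to reformulate $\mu$ as the infimum of the sum of $\varphi$ with the indicator of $\mathrm{gph}\,G$, apply the projection-on-the-first-component rule for marginal functions of convex functions of two variables, and then invoke the Moreau--Rockafellar sum rule to split the joint subdifferential into a subdifferential of $\varphi$ and a normal cone to $\mathrm{gph}\,G$ (the latter being a coderivative in disguise).

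Concretely, set $\psi(x,y) := \varphi(x,y)+\delta((x,y);\mathrm{gph}\,G)$, which is a proper convex function on $X\times Y$ (both regularity conditions (i) and (ii) guarantee $\mathrm{dom}\,\psi\neq\emptyset$), and observe that $\mu(x)=\inf_{y\in Y}\psi(x,y)$. The first step is the elementary \emph{marginal map lemma} for convex functions: for any $\bar y\in M(\bar x)$ one has
\begin{equation*}
\partial\mu(\bar x)=\{x^*\in X^*\mid (x^*,0)\in\partial\psi(\bar x,\bar y)\}, \qquad \partial^\infty\mu(\bar x)=\{x^*\in X^*\mid (x^*,0)\in\partial^\infty\psi(\bar x,\bar y)\}.
\end{equation*}
Both identities are direct from the definitions: if $(x^*,0)\in\partial\psi(\bar x,\bar y)$, then taking an infimum over $y$ in the subgradient inequality $\langle x^*,x-\bar x\rangle\le\psi(x,y)-\psi(\bar x,\bar y)$ gives $\langle x^*,x-\bar x\rangle\le\mu(x)-\mu(\bar x)$; the reverse uses $\mu(x)\le\psi(x,y)$ and $\mu(\bar x)=\psi(\bar x,\bar y)$. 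The singular version is identical via the epigraphical characterization \eqref{singular_subdifferential_convex}, once one notes $\mathrm{epi}\,\mu$ equals the projection of $\mathrm{epi}\,\psi$ on $X\times\mathbb R$ and this projection preserves the relevant normal vectors of the form $(x^*,0)$.

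The second step is the decomposition of $\partial\psi$. Under (i) or (ii), the Moreau--Rockafellar theorem applied to $\varphi$ and $\delta(\cdot;\mathrm{gph}\,G)$ yields
\begin{equation*}
\partial\psi(\bar x,\bar y)=\partial\varphi(\bar x,\bar y)+N\bigl((\bar x,\bar y);\mathrm{gph}\,G\bigr),
\end{equation*}
and the analogous formula for $\partial^\infty$ follows from the singular sum rule, together with the fact that $\partial^\infty\varphi=N(\cdot;\mathrm{dom}\,\varphi)$ recorded in Section~2.3. Condition (ii) puts $(x^0,y^0)$ in the relative interior of $\mathrm{dom}\,\varphi$ from the viewpoint of continuity, while condition (i) produces an interior point of $\mathrm{gph}\,G$ inside $\mathrm{dom}\,\varphi$; in either form the hypothesis of the convex sum rule is met. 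Substituting $(x^*,0)=(u^*,v^*)+(a^*,b^*)$ with $(u^*,v^*)\in\partial\varphi(\bar x,\bar y)$ and $(a^*,b^*)\in N((\bar x,\bar y);\mathrm{gph}\,G)$, and rewriting $b^*=-v^*$ via the definition \eqref{coderivative_convex_case} of $D^*G(\bar x,\bar y)(v^*)$, immediately produces the claimed unions over $\partial\varphi$ and $\partial^\infty\varphi$.

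The principal technical obstacle is the second step: verifying that either regularity condition genuinely triggers the Moreau--Rockafellar sum rule in the Hausdorff locally convex setting, for \emph{both} $\partial$ and $\partial^\infty$. Condition (i) is the cleanest, as a nonempty topological interior of $\mathrm{gph}\,G$ meeting $\mathrm{dom}\,\varphi$ is precisely the Rockafellar-type qualification needed; condition (ii), the continuity of $\varphi$ at a point of $\mathrm{gph}\,G$, forces $\mathrm{gph}\,G\subset\overline{\mathrm{dom}\,\varphi}$ locally and delivers the sum rule via a standard epigraphical-continuity argument. Once this sum rule is in place, everything else is a matter of rewriting, so I would devote most of the write-up to checking these two qualification conditions carefully and then simply composing the marginal map lemma with the sum-rule decomposition.
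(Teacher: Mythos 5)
This statement is not proved in the paper at all: it is imported verbatim from \cite[Theorem 4.2]{AnYen}, so there is no internal proof to compare against. Your reconstruction is, however, a correct and essentially standard proof of the quoted result, and it follows the same route as the cited source: write $\mu(x)=\inf_y\bigl[\varphi(x,y)+\delta((x,y);\mathrm{gph}\,G)\bigr]$, use the marginal-function lemma to reduce $\partial\mu(\bar x)$ (resp.\ $\partial^\infty\mu(\bar x)$) to the vectors $(x^*,0)$ in the joint subdifferential, split by Moreau--Rockafellar, and translate the normal cone to $\mathrm{gph}\,G$ into the coderivative via \eqref{coderivative_convex_case}. Two points deserve tightening in a full write-up. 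First, the ``singular sum rule'' you invoke should be made explicit: since $\partial^\infty\psi(\bar x,\bar y)=N((\bar x,\bar y);\mathrm{dom}\,\varphi\cap\mathrm{gph}\,G)$ and $\partial^\infty\varphi(\bar x,\bar y)=N((\bar x,\bar y);\mathrm{dom}\,\varphi)$, what you really need is the normal-cone intersection formula of Proposition~\ref{proposition1} (Ioffe--Tihomirov), which is available under either qualification because (i) gives $\mathrm{int}(\mathrm{gph}\,G)\cap\mathrm{dom}\,\varphi\neq\emptyset$ and (ii) gives $(x^0,y^0)\in\mathrm{int}(\mathrm{dom}\,\varphi)\cap\mathrm{gph}\,G$ --- continuity at a point places it in the topological interior of $\mathrm{dom}\,\varphi$, not merely its relative interior, and your phrase ``$\mathrm{gph}\,G\subset\overline{\mathrm{dom}\,\varphi}$ locally'' is an unnecessarily weak way of saying this. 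Second, for the singular marginal lemma it is cleaner to bypass the epigraph-projection remark (the projection of $\mathrm{epi}\,\psi$ need not equal $\mathrm{epi}\,\mu$ on the boundary) and argue directly with $\partial^\infty f=N(\cdot;\mathrm{dom}\,f)$ together with $\mathrm{dom}\,\mu=\Pi_X(\mathrm{dom}\,\psi)$, for which $x^*\in N(\bar x;\Pi_X(\mathrm{dom}\,\psi))$ if and only if $(x^*,0)\in N((\bar x,\bar y);\mathrm{dom}\,\psi)$. With these details filled in, your argument is complete.
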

  	 	\medspace
  	 	
  	 	We now specify Theorem \ref{theoremAnYen} for a case where ${\rm gph}\,G$ is a linear subspace of a product space. Suppose that $X$, $W$ and $Z$ are
  	 	Banach spaces with the dual spaces $X^*$, $W^*$ and $Z^*$, respectively. Assume that $M: Z\to X$ and $T: W\to X$
  	 	are continuous linear operators. Let $M^*: X^*\to Z^*$ and $T^*: X^*\to W^*$
  	 	be the adjoint operators of $M$ and $T$, respectively. Let 
  	 	$f:W\times Z\to\overline{\Bbb{R}}$ be a convex extended real-valued function and $\Omega$ a convex subset of $Z$ with
  	 	nonempty interior. For each $w\in W$, put
  	 	$ H(w)=\big\{z\in Z\,|\, Mz=Tw\big\}$ and consider the optimization problem 
  	 	\begin{eqnarray} \label{12b}
  	 	\min\{f(z,w) \, \mid \,z\in H(w)\cap \Omega \}.
  	 	\end{eqnarray} We want to compute the subdifferential and the singular subdifferential of the optimal value function
  	 	\begin{eqnarray} \label{12}
  	 	h(w):=\inf_{z\in H(w)\cap \Omega} f(z,w)
  	 	\end{eqnarray} of the parametric problem \eqref{12b}. Denote by $\widehat S(w)$ the solution set of \eqref{12b}. 
  	 	
  	 	\medskip
  	 Define the linear operator $\Phi: W \times Z \rightarrow X$ by setting $\Phi(w,z)=-Tw+Mz$ for all $(w,z)\in W \times Z$. 
  	  	
  	 	\begin{lemma}\label{bd1}   For each  $(\bar w, \bar z)\in{\rm gph}\,H$,
  	 		one has
  	 		\begin{eqnarray} \label{normalcone1}
  	 		N\big((\bar w, \bar z);{\rm gph}\,H \big)= {\rm cl}^*\big\{(-T^*x^*, M^*x^*)\,|\,
  	 		x^*\in X^*\big\}.
  	 		\end{eqnarray}
  	 		Moreover, if $\Phi$ has closed range, then
  	 		\begin{eqnarray}\label{normalcone2}
  	 		N\big((\bar w, \bar z);{\rm gph}\,H \big)= \big\{(-T^*x^*, M^*x^*)\,|\,
  	 		x^*\in X^*\big\}.
  	 		\end{eqnarray}
  	 		In particular, if $\Phi$ is surjective, then \eqref{normalcone2} is valid.
  	 	\end{lemma}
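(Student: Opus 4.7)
The plan is to recognize that $\mathrm{gph}\,H$ is precisely the linear subspace $\ker\Phi\subset W\times Z$, since by definition $(w,z)\in\mathrm{gph}\,H$ means $Mz=Tw$, i.e. $\Phi(w,z)=-Tw+Mz=0$. Because the normal cone to a linear subspace at any of its points coincides with the subspace's annihilator, I would write
\begin{equation*}
N\bigl((\bar w,\bar z);\mathrm{gph}\,H\bigr)=(\ker\Phi)^{\bot}.
\end{equation*}
Note this does not depend on the chosen base point $(\bar w,\bar z)$.

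Next I would compute the adjoint $\Phi^{*}:X^{*}\to W^{*}\times Z^{*}$. For any $x^{*}\in X^{*}$ and $(w,z)\in W\times Z$ the pairing
\begin{equation*}
\langle \Phi^{*}x^{*},(w,z)\rangle=\langle x^{*},-Tw+Mz\rangle=\langle -T^{*}x^{*},w\rangle+\langle M^{*}x^{*},z\rangle
\end{equation*}
identifies $\Phi^{*}x^{*}=(-T^{*}x^{*},M^{*}x^{*})$, so that
\begin{equation*}
\mathrm{rge}(\Phi^{*})=\bigl\{(-T^{*}x^{*},M^{*}x^{*})\,\big|\, x^{*}\in X^{*}\bigr\}.
\end{equation*}
The first identity \eqref{normalcone1} then follows immediately by applying Proposition~\ref{proposition2.173}(i) to the operator $\Phi$, which gives $(\ker\Phi)^{\bot}=\mathrm{cl}^{*}(\mathrm{rge}(\Phi^{*}))$.

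For the second identity \eqref{normalcone2}, I would invoke Proposition~\ref{proposition2.173}(ii): when $\mathrm{rge}\,\Phi$ is closed in $X$, the weak$^{*}$ closure can be dropped, yielding $(\ker\Phi)^{\bot}=\mathrm{rge}(\Phi^{*})$ exactly. Finally, the last assertion is trivial, since if $\Phi$ is surjective then $\mathrm{rge}\,\Phi=X$ is closed, so the hypothesis of the second part is automatically satisfied. I do not anticipate any real obstacle here; the whole argument is essentially an identification of $\mathrm{gph}\,H$ with $\ker\Phi$ combined with the standard duality between kernel and range of adjoint operators recalled in Proposition~\ref{proposition2.173}. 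The only point requiring a bit of care is checking the sign convention in the adjoint computation so that the minus sign correctly appears in front of $T^{*}x^{*}$.
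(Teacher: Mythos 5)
Your proof is correct and follows essentially the same route as the paper: identify $\mathrm{gph}\,H$ with $\ker\Phi$, note that the normal cone to this linear subspace is its annihilator, compute $\Phi^{*}x^{*}=(-T^{*}x^{*},M^{*}x^{*})$, and apply parts (i) and (ii) of Proposition~\ref{proposition2.173}, with surjectivity handled via closedness of the range. No gaps.
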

  	 	\begin{proof} First, note that $\Phi$ is continuous by the continuity of $T$ and $M$. Second, observe that 
  	 		$${\rm gph}\,H  =\{(w,z) \mid \Phi(w,z)=0 \}= \Phi^{-1}(0)={\rm ker}\, \Phi.$$ 
  	 		On one hand, we have \begin{align}\label{range_of_Phi-star}\Phi^*(x^*)=(-T^*x^*,M^*x^*)\ \; \forall x^*\in X^*,\end{align} because
  	 		\begin{align*}
  	 		\langle \Phi^*(x^*), (w,z)\rangle &= \langle x^*, \Phi(w,z) \rangle\\
  	 		&	= \langle x^*, -Tw \rangle + \langle x^*, Mz \rangle\\ & = \langle -T^*x^*, w \rangle + \langle M^*x^*, z \rangle \\
  	 		&	= \langle (-T^*x^*, M^*x^*),(w,z)\rangle
  	 		\end{align*}
  	 		for every $(w,z)\in W \times Z$. On the other hand, since ${\rm gph}\,H $ is a linear subspace of $W\times Z$, 
  	 		\begin{align}
  	 		\label{formula_proof2}
  	 		N((\bar w, \bar z); {\rm gph}\,H )=({\rm gph}\,H )^\bot=({\rm ker}\, \Phi )^\bot,
  	 		\end{align}
  	 		where  $$({\rm ker}\, \Phi )^\bot=\{(w^*,z^*)\in W^*\times Z^*\mid \langle (w^*,z^*), (w,z) \rangle =0\ \, \forall (w,z) \in {\rm ker}\, \Phi \}.$$ 
  	 		Hence, by the first assertion of Proposition \ref{proposition2.173}, \eqref{normalcone1} follows from \eqref{range_of_Phi-star} and \eqref{formula_proof2}. If $\Phi$ has closed range, then the weak$^*$ closure sign in \eqref{normalcone1} can be removed due to the second assertion of Proposition \ref{proposition2.173}. Thus,  \eqref{normalcone2} is valid.
  	 	 		If $\Phi$ is a surjective, then it has closed range; so  \eqref{normalcone2} holds true. 
  	 	 		$\hfill\Box$
  	 	\end{proof}
  	 	
  	 	\begin{lemma}\label{bd2}  If $\Phi$ has closed range and ${\rm ker}\, T^* \subset {\rm ker}\,M^*$ then, for each  $(\bar w, \bar z)\in{\rm gph}\,H$, one has
  	 		\begin{equation}\label{decomp}N\big((\bar w, \bar z); (W \times \Omega)\cap {\rm gph}\,H  \big)= \{0\}\times N(\bar z; \Omega) + N\big((\bar
  	 		w, \bar z); {\rm gph}\,H \big).\end{equation}
  	 	\end{lemma}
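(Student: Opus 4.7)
The strategy is to derive \eqref{decomp} from the sum rule in Proposition~\ref{proposition1} applied to the two convex sets $A_0 := {\rm gph}\,H$ and $A_1 := W \times \Omega$. Factorising the normal cone of a product yields $N((\bar w, \bar z); A_1) = \{0\} \times N(\bar z; \Omega)$, while Lemma~\ref{bd1} (activated by the closed range of $\Phi$) gives $N((\bar w, \bar z); A_0) = \{(-T^*x^*, M^*x^*) \mid x^* \in X^*\}$. Thus \eqref{decomp} is exactly the identity $N((\bar w, \bar z); A_0 \cap A_1) = N((\bar w, \bar z); A_0) + N((\bar w, \bar z); A_1)$ furnished by Proposition~\ref{proposition1}, and the whole task reduces to checking its constraint qualification $A_0 \cap {\rm int}\,A_1 \neq \emptyset$, i.e.\ the existence of $(w,z) \in {\rm gph}\,H$ with $z \in {\rm int}\,\Omega$. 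Note that ${\rm int}\,\Omega \neq \emptyset$ by the standing assumption on $\Omega$, so the constraint qualification is at least well posed.

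Producing such a pair geometrically seems delicate, since the hypotheses do not obviously furnish a $z \in {\rm int}\,\Omega$ with $Mz \in {\rm rge}\,T$. I will instead establish the constraint qualification dually, via the equivalence (a)$\Leftrightarrow$(b) of Proposition~\ref{proposition3}: with ${\rm int}\,A_1 \neq \emptyset$ in hand, it suffices to rule out condition (b), namely the existence of $x_0^* \in N((\bar w, \bar z); A_0)$ and $x_1^* \in N((\bar w, \bar z); A_1)$, not both zero, with $x_0^* + x_1^* = 0$. Writing $x_0^* = (-T^*x^*, M^*x^*)$ with $x^* \in X^*$ (by Lemma~\ref{bd1}) and $x_1^* = (0, n^*)$ with $n^* \in N(\bar z; \Omega)$, the equation $x_0^* + x_1^* = 0$ splits as $T^*x^* = 0$ and $M^*x^* + n^* = 0$. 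The hypothesis ${\rm ker}\,T^* \subset {\rm ker}\,M^*$ then forces $M^*x^* = 0$, whence $n^* = 0$, and therefore $x_0^* = x_1^* = 0$, contradicting the non-triviality in (b).

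With the constraint qualification thus verified, Proposition~\ref{proposition1} delivers \eqref{decomp}. The principal obstacle is precisely this dual detour through Proposition~\ref{proposition3}: it is there that the kernel inclusion ${\rm ker}\,T^* \subset {\rm ker}\,M^*$ is consumed, by neutralising any would-be Lagrange multiplier $x^* \in {\rm ker}\,T^*$ that could otherwise have survived on the $M^*$ side. One should tacitly assume $\bar z \in \Omega$ throughout, since otherwise both sides of \eqref{decomp} are empty and there is nothing to prove.
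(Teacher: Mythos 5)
Your proposal is correct and follows essentially the same route as the paper's proof: both verify the constraint qualification $A_0\cap{\rm int}\,A_1\neq\emptyset$ dually by using Lemma~\ref{bd1} (via the closed range of $\Phi$) together with ${\rm ker}\,T^*\subset{\rm ker}\,M^*$ to exclude condition (b) of Proposition~\ref{proposition3}, and then apply the sum rule of Proposition~\ref{proposition1}. No gaps.
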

  	 	\begin{proof} First, let us show that 
  	 		\begin{equation}\label{reg1new}
  	 		N ((\bar w, \bar z);W \times \Omega )\cap [ - N((\bar w, \bar z); {\rm gph}\,H  )]=\{(0,0)\}. 
  	 		\end{equation}
  	 		To obtain this property, take any $$(w^*,z^*)\in N ((\bar w, \bar z);W \times \Omega )\cap [ - N((\bar w, \bar z); {\rm gph}\,H  )].$$ Since $N ((\bar w, \bar z);W \times \Omega )=\{0\}\times N(\bar z; \Omega),$ we must have $w^*=0,\, z^*\in N(\bar z; \Omega).$ 
  	 		As $\Phi$ has closed range, \eqref{normalcone2} is valid by Lemma \ref{bd1}. Therefore, the inclusion  $(w^*,z^*)\in  - N((\bar w, \bar z); {\rm gph}\,H  )$ implies the existence of $x^*\in X^*$ such that
  	 		$0=T^*x^*$ and $z^*=-M^*x^*$. Combining this with the inclusion  ${\rm ker}\, T^* \subset {\rm ker}\,M^*$, we obtain $z^*=0$. The property \eqref{reg1new} has been proved.
  	 		
  	 		Next, since $ {\rm int} \, \Omega \not= \emptyset$, we see that $W \times \Omega$ is a convex set with nonempty interior. Let $A_0:={\rm gph}\,H $ and $A_1:=W \times \Omega$. Due to \eqref{reg1new}, one cannot find any $(w^*_0,z^*_0)\in N((\bar w, \bar z);A_0)$ and $(w^*_1,z^*_1)\in N((\bar w, \bar z);A_1)$, not all zero, such that $(w^*_0,z^*_0)+(w^*_1,z^*_1)=0$. Hence, applying Proposition \ref{proposition3} to the sets $A_0$ and $A_1$ and the point $(\bar w, \bar z)\in A_0\cap A_1$, we can assert that  $A_0\cap {\rm int}\,A_1\neq\emptyset$. Therefore, by Proposition \ref{proposition1} we have \begin{equation}\label{decomp1}N((\bar w, \bar z);A_0\cap A_1)=N((\bar w, \bar z);A_0)+N((\bar w, \bar z);A_1).\end{equation} Since $N((\bar w, \bar z);A_0)=N\big((\bar
  	 		w, \bar z); {\rm gph}\,H \big)$ and $N((\bar w, \bar z);A_1)=\{0\}\times N(\bar z; \Omega)$, the equality \eqref{decomp} follows from \eqref{decomp1}.
  	 			$\hfill\Box$
  	 	\end{proof}
  	 
  	  	\begin{theorem}\label{asprogramingproblem} 
  	 		Suppose that $\Phi$ has closed range and ${\rm ker}\, T^* \subset {\rm ker}\,M^*$. If the optimal value function $h$ in \eqref{12} is finite at $\bar w \in {\rm dom}\, \widehat{S}$ and $f$ is continuous at $( \bar w, \bar z) \in  (W \times \Omega) \cap {\rm gph}\,H ,$ then
  	 		\begin{eqnarray}\label{BT1}
  	 		\partial h(\bar w)= \bigcup_{(w^*, z^*)\in \partial f(\bar z, \bar w)}\;\bigcup_{v^*\in N(\bar z;
  	 			\Omega)}\big[ w^*+ T^*\big((M^*) ^{-1}(z^* +v^*)\big)\big]
  	 		\end{eqnarray}
  	 		and
  	 		 		\begin{eqnarray}\label{BT1'}
  	 		\partial^\infty h(\bar w)= \bigcup_{(w^*, z^*)\in \partial^\infty f(\bar z, \bar w)}\;\bigcup_{v^*\in N(\bar z;
  	 			\Omega)}\big[ w^*+ T^*\big((M^*) ^{-1}(z^* +v^*)\big)\big],
  	 		\end{eqnarray}
  	 		where $\big(M^*) ^{-1}(z^* +v^*)=\{x^*\in X^*\mid M^*x^*=z^* +v^*\}$. 		
  	 	\end{theorem}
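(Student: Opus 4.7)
My plan is to view problem \eqref{12b} as a specific instance of the abstract parametric problem \eqref{math_program} and then invoke Theorem \ref{theoremAnYen}. I would set $G(w) := H(w) \cap \Omega$, so that $\mathrm{gph}\,G = (W \times \Omega) \cap \mathrm{gph}\,H$, and treat $f$ as the objective $\varphi$ on $W \times Z$. Then $h$ in \eqref{12} coincides with the marginal function in \eqref{marginalfunction}, and $\widehat{S}$ with the corresponding solution map. Since $f$ is continuous at $(\bar w, \bar z) \in \mathrm{gph}\,G$, regularity condition (ii) of Theorem \ref{theoremAnYen} is immediate, so that theorem applies.

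The next step is to compute $N((\bar w, \bar z); \mathrm{gph}\,G)$. Since $\Phi$ has closed range and $\ker T^* \subset \ker M^*$, Lemma \ref{bd2} yields
\[ N\bigl((\bar w, \bar z); \mathrm{gph}\,G\bigr) = \{0\} \times N(\bar z; \Omega) + N\bigl((\bar w, \bar z); \mathrm{gph}\,H\bigr), \]
while Lemma \ref{bd1} delivers
\[ N\bigl((\bar w, \bar z); \mathrm{gph}\,H\bigr) = \{(-T^*x^*, M^*x^*) \mid x^* \in X^*\}. \]

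Combining these two formulas with the definition \eqref{coderivative_convex_case} of coderivative, a vector $w^*$ lies in $D^*G(\bar w, \bar z)(z^*)$ precisely when there exist $v^* \in N(\bar z; \Omega)$ and $x^* \in X^*$ such that $w^* = -T^*x^*$ and $M^*x^* = -(z^* + v^*)$. The linear substitution $x^* \mapsto -x^*$ rewrites this as
\[ D^*G(\bar w, \bar z)(z^*) = \bigcup_{v^* \in N(\bar z; \Omega)} T^*\bigl((M^*)^{-1}(z^* + v^*)\bigr). \]
Plugging this coderivative formula into the two assertions of Theorem \ref{theoremAnYen} applied to $\mu = h$ at $(\bar w, \bar z)$, and writing the outer union over $(w^*, z^*) \in \partial f(\bar z, \bar w)$ (respectively, $\partial^\infty f(\bar z, \bar w)$) together with the inner union over $v^* \in N(\bar z;\Omega)$, yields \eqref{BT1} and \eqref{BT1'} at once.

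The main delicate point is bookkeeping rather than a substantive obstacle: tracking the sign conventions in the coderivative (the appearance of $-z^*$ in \eqref{coderivative_convex_case}), reconciling the notation $f(z,w)$ used in \eqref{12b} with the order of factors in $W \times Z$ on which $\varphi$ is formally defined, and remembering that $(M^*)^{-1}(z^* + v^*)$ is a set, since $M^*$ need not be injective. The genuinely hard analytic content---the sum rule of Proposition \ref{proposition1} applied inside Lemma \ref{bd2} and the removal of the weak$^*$ closure sign in the representation of $N((\bar w,\bar z); \mathrm{gph}\,H)$---has already been packaged into Lemmas \ref{bd1} and \ref{bd2}, so at this stage only elementary assembly remains.
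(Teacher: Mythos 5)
Your proposal is correct and follows essentially the same route as the paper's own proof: apply Theorem \ref{theoremAnYen} with $\widetilde{G}(w)=H(w)\cap\Omega$ via regularity condition (ii), compute $D^*\widetilde{G}(\bar w,\bar z)(z^*)$ from Lemmas \ref{bd1} and \ref{bd2}, and substitute into the two subdifferential formulas. Your explicit handling of the sign change $x^*\mapsto -x^*$ in passing from $M^*x^*=-(z^*+v^*)$ to $T^*\bigl((M^*)^{-1}(z^*+v^*)\bigr)$ is exactly the bookkeeping the paper performs.
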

  	 	\begin{proof} (This proof is based on Theorem \ref{theoremAnYen}, Lemma \ref{bd1}, and Lemma \ref{bd2}.) We apply Theorem \ref{theoremAnYen} to the case where $w,z$, $f(z,w)$, $H(w)\cap \Omega$ and $h(w)$ play, respectively, the roles of $x,y$, $\varphi(x,y)$, $G(x)$ and $\mu(x)$. By the assumptions of the theorem, $f$ is continuous at $( \bar w, \bar z) \in  (W \times \Omega) \cap {\rm gph}\,H$. Hence, the regularity  condition (ii) of Theorem~\ref{theoremAnYen} is satisfied. Therefore,
  	 		\begin{align}\label{subdifferential1}
  	 		\partial h(\bar w) = \bigcup\limits_{(w^*,z^*) \in \partial f(\bar z, \bar w)}   \big\{w^* + D^*\widetilde{G}( \bar w, \bar z)(z^*) \big\}
  	 		\end{align}
  	 		and
  	 		\begin{align}\label{subdifferential2}
  	 		\partial^\infty h(\bar w) = \bigcup\limits_{(w^*,z^*) \in \partial^\infty f(\bar z, \bar w)}   \big\{w^* + D^*\widetilde{G}( \bar w, \bar z)(z^*) \big\},
  	 		\end{align}
  	 		where $\widetilde{G}(w):= H(w) \cap \Omega$ for all $w \in W$. Clearly, ${\rm gph}\,\widetilde{G}=(W \times \Omega) \cap {\rm gph}\,H.$  Let us show that
  	 		\begin{align}\label{coderivative}
  	 		D^*\widetilde{G}( \bar w, \bar z)(z^*) = \bigcup\limits_{v^*\in N(\bar z; \Omega)}   \big\{T^*[(M^*)^{-1}(z^*+v^*)] \big\}.
  	 		\end{align}
  	 		By the definition of coderivative,
  	 		\begin{align*}
  	 		D^*\widetilde{G}( \bar w, \bar z)(z^*) &= \big\{\widetilde{w}^* \in W^* \mid (\widetilde{w}^*, -z^*)\in  N ((\bar w, \bar z); {\rm gph}\, \widetilde{G})\}\\
  	 		&=\big\{\widetilde{w}^* \in W^* \mid (\widetilde{w}^*, -z^*)\in  N ((\bar w, \bar z); (W \times \Omega) \cap {\rm gph}\,H\big)\}.
  	 		\end{align*}
  	 		So, the assumptions made allow us to use formula \eqref{decomp} in Lemma \ref{bd2} to have
  	 		\begin{align*}
  	 		D^*\widetilde{G}( \bar w, \bar z)(z^*) &= \big\{\widetilde{w}^* \in W^* \mid (\widetilde{w}^*, -z^*)\in  \{0\} \times N(\bar z, \Omega) + N((\bar w, \bar z); {\rm gph}\, H)\big\}\\
  	 		&=\bigcup\limits_{v^*\in N(\bar z; \Omega)} \big\{\widetilde{w}^* \in W^* \mid (\widetilde{w}^*, -z^*)-(0,v^*)\in N((\bar w, \bar z); {\rm gph}\, H)\big\}\\
  	 		&=\bigcup\limits_{v^*\in N(\bar z; \Omega)} \big\{\widetilde{w}^* \in W^* \mid (\widetilde{w}^*, -z^*-v^* )\in N((\bar w, \bar z); {\rm gph}\, H)\big\}.
  	 		\end{align*}
  	 		Furthermore, as $\Phi$ has closed range, \eqref{normalcone2} is valid. Hence, $\widetilde{w}^* \in D^*\widetilde{G}( \bar w, \bar z)(z^*)$ if and only if there exist $v^*\in N(\bar z; \Omega)$ and $x^*\in X^*$ such that
  	 		$(\widetilde{w}^*, -z^*-v^* )=(-T^*x^*, M^*x^*)$. It follows that $x^*\in (M^*)^{-1}(-z^*-v^*)$ and $\widetilde{w}^*=-T^*x^*$. Therefore, $\widetilde{w}^* \in D^*\widetilde{G}( \bar w, \bar z)(z^*)$ if and only if
  	 		$$\widetilde{w}^*\in T^*[ (M^*)^{-1}(z^*+v^*)]$$
  	 		for some $z^* \in N(\bar z; \Omega)$. Thus, the equality \eqref{coderivative} has been proved.
  	 		
  	 		Combining \eqref{subdifferential1} with \eqref{coderivative}, we obtain \eqref{BT1}. Finally, we can easily get the equality \eqref{BT1'} from \eqref{subdifferential2} and \eqref{coderivative}.
  	 	  	 	$\hfill\Box$\end{proof}
  	 	
  	 \section{Differential Stability of the Control Problem}\markboth{\centerline{\it Main Results}} {\centerline{\it D. T. V.~An and N. T.~Toan
  		}} 

  	Based on Theorem \ref{asprogramingproblem}, we can obtain formulas for computing or estimating the subdifferential and singular subdifferential of the optimal value function $V(w)$ of the parametric control problem \eqref{problem1}--\eqref{problem4}.
  	
  	\medskip
  	In the notation of Subsections~2.1 and~2.2, put $Z=X\times U$ and $K=C\times
  	 \widetilde{X}\times\Omega$ and note that $	V(w)$ can be expressed as
  	 \begin{align}\label{optimalvaluefunction}
  	 V(w)=\inf_{z\in G(w)\cap K} f(z,w),
  	 \end{align}
  	  where
  	 \begin{align}\label{constraintmultifunction}
  	 G(w)=\big\{z=(x,u)\in Z \,| \, Mz=Tw\big\}
  	 \end{align}
  	 with $M:Z\to \widetilde{X}$ and $T: W\to
  	 \widetilde{X}$ are defined, respectively, by
  	 \begin{eqnarray}\nonumber
  	 \setcounter{MaxMatrixCols}{20} Mz=
  	 \left(
  	 \begin{array}{llllllllllll}
  	 -A_0 &I    &0 &0 &\ldots&0&0 &-B_0  &0  &0  &\ldots&0 \\
  	 0   &-A_1 &I &0 &\ldots&0 &0 &0    &-B_1 &0  &\ldots&0\\
  	 \vdots&\vdots&\vdots&\vdots&\vdots&\vdots&\vdots&\vdots&\vdots&\vdots&\vdots&\vdots\\
  	 0   &0    &0  &0 &\ldots&-A_{N-1}&I  &0  &0 &0
  	 &\ldots&-B_{N-1}
  	 \end{array}
  	 \right)
  	 \left(
  	 \begin{array}{ll}
  	 x_0\\
  	 x_1\\
  	 \vdots\\
  	 x_N\\
  	 u_0\\
  	 u_1\\
  	 \vdots\\
  	 u_{N-1}
  	 \end{array}
  	 \right),
  	 \end{eqnarray}
  	 
  	 \begin{eqnarray}\nonumber
  	 \setcounter{MaxMatrixCols}{20} Tw=
  	 \left(
  	 \begin{array}{ll}
  	 T_0w_0\\
  	 T_1w_1\\
  	 \vdots\\
  	 T_{N-1}w_{N-1}
  	 \end{array}
  	 \right).
  	 \end{eqnarray}
  	Then the problem \eqref{problem1}--\eqref{problem4} reduces to the mathematical programming problem \eqref{12b}. For every $\tilde x^*=(\tilde x_1^*, \tilde x_2^*,...,\tilde x_N^*) \in \widetilde{X}^*$, one has 
  	\begin{align}\label{Tconju}
  	T^*\tilde x^*=\big(T_0^*\tilde x_1^*, T_1^*\tilde x_2^*, \cdots,
  	 T_{N-1}^*\tilde x_N^*\big)\in W^*=W_0^*\times W_1^*\times\cdots\times W_{N-1}^*
  	 \end{align}
  	  and
  	 \begin{eqnarray}\label{Mconju}
  	 \setcounter{MaxMatrixCols}{20} M^*\tilde x^*=
  	 \left(
  	 \begin{array}{lllll}
  	 -A_0^*&0 &0 &\ldots&0\\
  	 I &-A_1^*&0 &\ldots&0\\
  	 0&I& &\ldots&0\\
  	 
  	 \vdots&\vdots&\vdots&\vdots&\vdots\\
  	 0 &0 &0 &\ldots&-A_{N-1}^*\\
  	 0 &0 &0 &\ldots &I\\
  	 -B_0^*&0 &0 &\ldots &0 \\
  	 0 &-B_1^*&0 &\ldots &0\\
  	 
  	 \vdots&\vdots&\vdots&\vdots&\vdots\\
  	 0 &0 &0 &\ldots &-B_{N-1}^*
  	 \end{array}
  	 \right)
  	 \left(
  	  \begin{array}{ll}
  	 \tilde x_1^*\\
  	 \tilde x_2^*\\
  	 \vdots\\
  	 \tilde x_N^*
  	 \end{array}
  	 \right),
  	 \end{eqnarray}
  	 where $T^*$, $M^*$, $A_i^*$, and $B_i^*$ are the adjoint operators of $T$, $M$, $A_i$, and $B_i$, respectively.
  	\begin{theorem}\label{Mainresult} Suppose that $h_k, \ k=0,1,\ldots, N$, are continuous and the interiors of $\Omega_k$, for $k=0,1,\ldots, N-1,$ are nonempty. Suppose in addition that the following conditions are satisfied:\par
  	  	{\rm (i)} ${\rm ker}\, T^* \subset {\rm ker}\,M^*;$ \par
  	{\rm (ii)} The operator $\Phi: W \times Z \rightarrow \widetilde{X}$ defined by $\Phi(w,z)=-Tw+Mz$ has closed range.\\
  	  	Then, if $\tilde w^*=(\tilde w_0^*,\tilde w_1^*,\ldots, \tilde w_{N-1}^*)\in \partial V(\bar w)$ then there exist $x_0^*\in N(\bar x_0; C)$, $\tilde x^*=(\tilde x_1^*, \tilde x_2^*,\ldots,\tilde x_N^*)\in {\widetilde{X}^*}$, and $u^*=(u_0^*, u_1^*,\ldots, u_{N-1}^*)\in N(\bar
  	  	   	 u;\Omega)$, such that
  	 \begin{align} \label{nondifferentiable}
  	 \begin{cases}
  	  \tilde x^*_N \in \partial h_N(\bar x_N),\\
  	   \tilde x^*_k \in  \partial _{x_k} h_k(\bar x_k, \bar u_k, \bar w_k)  + A^*_k \tilde x^*_{k+1}, \ k=1,2,...,N-1,\\
  	  x^*_0 \in -  \partial _{x_0} h_0(\bar x_0, \bar u_0, \bar w_0)  -A^*_0 \tilde x^*_1  ,\\
  	u^*_k \in - \partial _{u_k} h_k(\bar x_k, \bar u_k, \bar w_k) -B^*_k \tilde x^*_{k+1}  , \ k=0,1,...,N-1,\\
  	  \tilde w^*_k \in \partial _{w_k} h_k(\bar x_k, \bar u_k, \bar w_k) +T^*_k\tilde x^*_{k+1}, \ k=0,1,...,N-1.
  	   	   	 \end{cases}
  	  \end{align}
  	   
  	\end{theorem}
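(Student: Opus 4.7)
The strategy is to identify the control problem with the parametric problem \eqref{12b}--\eqref{12} of Section~3, with $K=C\times\widetilde{X}\times\Omega$ playing the role of the constraint set, and then to invoke Theorem \ref{asprogramingproblem}. Hypotheses (i) and (ii) of the present statement are exactly the regularity conditions of that theorem, while its continuity hypothesis comes for free: each $h_k$ is a finite-valued convex function on a Banach space, hence continuous, so $f(z,w)=\sum_{k=0}^{N-1}h_k(x_k,u_k,w_k)+h_N(x_N)$ is continuous on all of $Z\times W$, in particular at $(\bar w,\bar z)$. Theorem \ref{asprogramingproblem} therefore delivers
\begin{align*}
\partial V(\bar w)=\bigcup_{(z^*,w^*)\in\partial f(\bar z,\bar w)}\ \bigcup_{v^*\in N(\bar z;K)}\big[w^*+T^*\big((M^*)^{-1}(z^*+v^*)\big)\big],
\end{align*}
and the task is to unpack the right-hand side.

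\textbf{Decoupling $\partial f$ and $N(\bar z;K)$.} Because the summands of $f$ depend on pairwise disjoint groups of variables, the product rule for convex subdifferentials yields $(z^*,w^*)\in\partial f(\bar z,\bar w)$ if and only if $x_N^*\in\partial h_N(\bar x_N)$ and $(x_k^*,u_k^*,w_k^*)\in\partial h_k(\bar x_k,\bar u_k,\bar w_k)$ for $k=0,\ldots,N-1$; in particular $x_k^*\in\partial_{x_k}h_k$, $u_k^*\in\partial_{u_k}h_k$, $w_k^*\in\partial_{w_k}h_k$ at the common base point. The product structure of $K$ gives $N(\bar z;K)=N(\bar x_0;C)\times\{0\}\times\cdots\times\{0\}\times\prod_{k=0}^{N-1}N(\bar u_k;\Omega_k)$, so $v^*$ has components $v_{x_0}^*\in N(\bar x_0;C)$ and $v_{u_k}^*\in N(\bar u_k;\Omega_k)$ but vanishing $X_k^*$-part for $k=1,\ldots,N$.

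\textbf{Reading off the adjoint system.} Given $\tilde w^*\in\partial V(\bar w)$, the formula supplies $(z^*,w^*)$, $v^*$ and $\tilde x^*=(\tilde x_1^*,\ldots,\tilde x_N^*)\in\widetilde{X}^*$ with $M^*\tilde x^*=z^*+v^*$ and $\tilde w^*=w^*+T^*\tilde x^*$. Comparing $X_k^*$-- and $U_k^*$--components via the explicit matrix \eqref{Mconju} yields, row by row, the terminal identity $\tilde x_N^*=x_N^*$, the costate recursion $\tilde x_k^*-A_k^*\tilde x_{k+1}^*=x_k^*$ for $1\le k\le N-1$, the transversality identity $-A_0^*\tilde x_1^*=x_0^*+v_{x_0}^*$, and the control identity $-B_k^*\tilde x_{k+1}^*=u_k^*+v_{u_k}^*$ for $0\le k\le N-1$; formula \eqref{Tconju} for $T^*$ then gives $\tilde w_k^*=w_k^*+T_k^*\tilde x_{k+1}^*$. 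Substituting the partial-subdifferential inclusions satisfied by $x_k^*,u_k^*,w_k^*$ and renaming the normal-cone components $v_{x_0}^*,v_{u_k}^*$ as $x_0^*,u_k^*$ (the notation used in the statement) produces exactly system \eqref{nondifferentiable}.

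\textbf{Main obstacle.} The genuine mathematical content is carried by Theorem \ref{asprogramingproblem}; what remains is essentially bookkeeping on the block-bidiagonal structure of $M^*$. The only point needing care is the index shift $k\mapsto k+1$ hardwired into that structure (so $\partial_{x_k}h_k$ is paired with $\tilde x_k^*$ and $A_k^*\tilde x_{k+1}^*$ rather than with $\tilde x_{k+1}^*$), together with tracking of the signs produced by the $-A_k^*$ and $-B_k^*$ blocks; these signs explain why the $x_0^*$ and $u_k^*$ inclusions in \eqref{nondifferentiable} appear with a minus in front of the relevant partial subdifferential, whereas the $\tilde x_k^*$ ($k\ge 1$) and $\tilde w_k^*$ inclusions appear with a plus.
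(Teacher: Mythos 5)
Your proposal follows essentially the same route as the paper: identify the control problem with problem \eqref{12b}, apply Theorem \ref{asprogramingproblem} with $K=C\times\widetilde{X}\times\Omega$, split $\partial f(\bar z,\bar w)$ and $N(\bar z;K)$ componentwise, and read off \eqref{nondifferentiable} from the explicit forms \eqref{Tconju} and \eqref{Mconju} of $T^*$ and $M^*$ (the paper performs the splitting via the Moreau--Rockafellar theorem and one-directional inclusions, which serves the same purpose since only the necessary condition is claimed). One caveat: your aside that the continuity hypothesis of Theorem \ref{asprogramingproblem} ``comes for free'' because a finite-valued convex function on a Banach space is automatically continuous is false in infinite dimensions; the step is nevertheless sound only because continuity of the $h_k$ is an explicit hypothesis of the theorem, which is exactly how the paper argues.
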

  	
  	\begin{proof}
  	Since the functions $h_k, \ k=0,1,\ldots, N,$ are continuous, the objective function $f$ of \eqref{12b} is continuous. Note that $V(w)$ coincides with the optimal value function $h(w)$ in \eqref{12} of \eqref{12b}. As $\Phi$ has closed range and ${\rm ker}\, T^* \subset {\rm ker}\,M^*$, applying Theorem \ref{asprogramingproblem} to \eqref{12b} yields
  		\begin{eqnarray}\label{formula_subdifferential}
  		\partial V(\bar w)= \bigcup_{(z^*, w^*)\in \partial f(\bar z, \bar w)}\;\bigcup_{v^*\in N(\bar z;K)}\big[ w^*+ T^*\big((M^*) ^{-1}(z^* +v^*)\big)\big].
  		\end{eqnarray}
  Consider the function $\tilde{h}_k: X \times U \times W \rightarrow \Bbb{R},\  k=0,1,...,N-1$, given by $\tilde{h}_k(x,u,w)={h}_k(x_k,u_k,w_k)$. Let $\tilde{h}_N: X \times U \times W \to \Bbb{R}$ be defined by $\tilde{h}_N(x,u,w)={h}_N(x_N).$ 
 From \eqref{formula_subdifferential} one has $ \tilde w^* \in \partial V(\bar w)$ if and only if there exist $(z_1^*,w_1^*) \in \partial f(\bar z, \bar w)$ and $v_1^*=(x^*,u^*) \in N(\bar z; K)$ such that
 $$\tilde{w}^* \in w_1^* + T^* ((M^*)^{-1} )(z_1^*+v_1^*).$$
 The last inclusion means that there exists $\tilde x^*=(\tilde x^*_1, \tilde x^*_2,...,\tilde x^*_N) \in \widetilde{X}^*$ such that 
 \begin{align}\label{32new}
 \begin{cases}
 M^* \tilde{x}^* = z_1^* +v_1^*,\\
 \tilde{w}^* \in w_1^* +T^*\tilde{x}^*. 
 \end{cases}
 \end{align}
 Denote by $\partial _z f(\bar z, \bar w), \partial _w f(\bar z, \bar w)$ the subdifferentials of $ f(., \bar w)$ at $\bar z$ and $ f(\bar z,.)$ at $\bar w$, respectively. We have
     	$\partial f(\bar z,\bar w) \subset \partial_{z} f (\bar z, \bar w)\times \partial_{w} f(\bar z, \bar w).$ Indeed, for any $(z_1^*,w_1^*)\in \partial f(\bar z, \bar w),$ 
     	$$\langle (z_1^*,w_1^*), (z,w)-(\bar z, \bar w) \rangle \le f(z,w)-f(\bar z, \bar w),\, \forall (z,w)\in Z \times W.$$
     	Hence, taking $w=\bar w$ yields
     	$\langle z_1^*, z-\bar z \rangle \le f(z,\bar w)-f(\bar z, \bar w)$ for every $z\in Z.$ So $z_1^* \in \partial_z f(\bar z, \bar w).$ Similarly, $w_1^* \in \partial_w f(\bar z, \bar w).$ As $(z_1^*,w_1^*) \in \partial f(\bar z, \bar w)$, in combination with \eqref{32new}, the preceding observation gives
  	\begin{align}\label{formulas_subdifferential2}
  	\begin{cases}
  	M^*\tilde{x}^* \in  \partial _z f(\bar z, \bar w)+v_1^*,\\
  	\tilde{w}^* \in  \partial_w f(\bar z, \bar w) +T^* \tilde{x}^*.
  	\end{cases}
  	\end{align}
   By the continuity of $\tilde{h}_k(.), \ k=0,1,\ldots, N$, applying the Moreau--Rockafellar Theorem \cite[p.~48]{Ioffe_Tihomirov_1979}, we have 
  	  \begin{equation}
  	    	\begin{split} \label{newformula}
  	  	\partial_z f(\bar z, \bar w)& = \partial_z \left( \sum\limits_{k=0}^{N} \tilde{h}_k\right)(\bar z, \bar w) =\sum\limits_{k=0}^{N}  \partial_z \tilde{h}_k(\bar z, \bar w)  	  	  \\
  	  	  & \subset \sum\limits_{k=0}^{N} \partial_{x_k} \tilde{h}_k (\bar x_k, \bar u_k, \bar w_k)\times \partial_{u_k} \tilde{h}_k (\bar x_k,\bar u_k, \bar w_k) .
  \end{split}
    	  	  	  \end{equation}
It is easy to see that
  \begin{align*}
  & \partial_{x_0} \tilde{h}_0(\bar x_0, \bar u_0, \bar w_0)=\partial _{x_0}h_0(\bar x_0, \bar u_0, \bar w_0) \times \{0\}\times ... \times \{0\},\\
  &\partial_{x_1} \tilde{h}_1(\bar x_1, \bar u_1, \bar w_1)=\{0\}\times \partial _{x_1}h_1(\bar x_1, \bar u_1, \bar w_1)\times ... \times \{0\},\\
  &...\\
  & \partial_{x_{N\!-\!1}} \tilde{h}_{N\!-\!1}\!(\bar x_{N\!-\!1}, \bar u_{N\!-\!1}, \bar w_{N\!-\!1}\!)\!=\!\{0\}\!\times \!...\!\times\!\partial _{x_{N\!-\!1}}h_{N\!-\!1}(\bar x_{N\!-\!1}, \bar u_{N\!-\!1}, \bar w_{N\!-\!1}) \!\times\! \{0\},\\
  &\partial _{x_N} \tilde{h}_N( \bar x_N)=\partial h_N(\bar x_N).
      	  	\end{align*}
 Similarly,
  \begin{align*}
    & \partial_{u_0} \tilde{h}_0(\bar x_0, \bar u_0, \bar w_0)=\partial _{u_0}h_0(\bar x_0, \bar u_0, \bar w_0) \times \{0\}\times ... \times \{0\},\\
    &\partial_{u_1} \tilde{h}_1(\bar x_1, \bar u_1, \bar w_1)=\{0\}\times \partial _{u_1}h_1(\bar x_1, \bar u_1, \bar w_1)\times ... \times \{0\},\\
    &...\\
    & \partial_{u_{N\!-\!1}} \tilde{h}_{N\!-\!1}\!(\bar x_{N\!-\!1}, \bar u_{N\!-\!1}, \bar w_{N\!-\!1}\!)\!=\!\{0\}\!\times \!...\!\times\!\partial _{u_{N\!-\!1}}h_{N\!-\!1}(\bar x_{N\!-\!1}, \bar u_{N\!-\!1}, \bar w_{N\!-\!1}) \!\times\! \{0\}.
        	  	\end{align*}    	  		  	   
   Hence
   \begin{equation}\label{33a}
   \begin{split}
    &\partial_z f(\bar z, \bar w)\\ & \subset  \partial_{x_0}h_0 (\bar x_0, \bar u_0, \bar w_0)\! \times\!...\!\times \! \partial_{x_{N\!-\!1}}h_{N\!-\!1} (\bar x_{N\!-\!1}, \bar u_{N\!-\!1}, \bar w_{N\!-\!1})\!\times\! \partial h_N (\bar x_{N}) \\
    & \quad \times  \partial_{u_0}h_0 (\bar x_0, \bar u_0, \bar w_0)\times...\times \partial_{u_{N-1}}h_{N-1} (\bar x_{N-1}, \bar u_{N-1}, \bar w_{N-1}).
    \end{split}
    \end{equation} 
   In the same manner, we obtain
  	\begin{equation}\label{33b}
  	\begin{split}
    	  	\partial_w f(\bar z, \bar w) &\subset \sum\limits_{k=0}^{N-1} \partial_{w_k} \tilde{h}_k (\bar x_k, \bar u_k, \bar w_k)
    	  \\
    	  &\subset {\partial}_{w_0} h_0 (\bar x_0, \bar u_0, \bar w_0) \times...\times{\partial}_{w_{N-1}} h_{N-1} (\bar x_{N-1}, \bar u_{N-1}, \bar w_{N-1}) .
   	  	  	\end{split}
   	  	  	\end{equation}
   	Since 
   	\begin{align*}
   	v_1^*\in N(\bar z; K)&=N(\bar x_0; C)\times\{0_{\tilde{X}^*}\}\times N(\bar
     	  	u;\Omega)\\
   &=N(\bar x_0; C)\times\{0_{\tilde{X}^*}\}\times N(\bar
     	  	     	  	u_0;\Omega_0)\times ...\times N(\bar u_{N-1}; \Omega_{N-1}),
    \end{align*} 	  	
  there exist $x_0^*\in N(\bar x_0; C)$ and $ u^*=(u_0^*,
     	  	u_1^*,\ldots,u_{N-1}^*)$ with $u_k^*\in N(\bar u_k; \Omega_k)$
     	  	$(k=0,1,\ldots,N-1)$ such that $v_1^*=(x_0^*,0,u^*)$.
  Therefore, from the first inclusion in \eqref{formulas_subdifferential2} and from \eqref{Mconju}, \eqref{33a}, we get
  	\begin{align*}
    &\left(
    	    \begin{array}{lllll}
    	-A_0^* &0 &0 &\ldots &0\\
    	I &-A_1^* &0 &\ldots &0\\
    	0 &I &0 &\ldots &0\\
    	\vdots &\vdots&\vdots&\vdots&\vdots\\
    	0 &0 &0 &\ldots &-A_{N-1}^*\\
    	0 &0 &0 &\ldots &I
    	\end{array}
    	  \right)
    	\left(
    	    \begin{array}{ll}
    	\tilde x_1^*\\
    	\tilde x_2^*\\
    	\vdots\\
    	\tilde x_N^*
    	\end{array}
    	 \right)
    	 \\
    	 & \in  \big(\partial_{x_0} h_0(\bar x_0,\bar u_0,\bar w_0)+x_0^*\big)\times \partial_{x_1} h_1(\bar x_1,\bar u_1,\bar w_1)\times...\times \\
    	 &\quad \quad \quad \partial_{x_{N-1}} h_{N-1} (\bar x_{N-1}, \bar u_{N-1},\bar w_{N-1}) \times \partial h_N(\bar x_N)
    	\end{align*}
    	and
    \begin{align*}
       	   &	\left(
    	    	    \begin{array}{lllll}
    	    	-B_0^* &0 &0 &\ldots &0\\
    	    	0 &-B_1^* &0 &\ldots &0\\
    	       	    	\vdots &\vdots&\vdots&\vdots&\vdots\\
    	    	    	0 &0 &0 &\ldots  &-B^*_{N-1}
    	    	\end{array}
    	    	  \right)
    	    	   	    	\left(
    	    	    \begin{array}{ll}
    	    	\tilde x_1^*\\
    	    	\tilde x_2^*\\
    	    	\vdots\\
    	    	\tilde x_N^*
    	    	\end{array}
    	    	  \right) \\
    	    	 & \in  \big(\partial_{u_0} h_0(\bar x_0,\bar u_0,\bar w_0)+u_0^*\big )\times...\times
    	    	  \big (\partial_{u_{N-1}} h_{N-1}(\bar x_{N-1}, \bar u_{N-1},
    	    	  \bar w_{N-1}) +u^*_{N-1} \big ).
    	    \end{align*}
   This implies that
 \begin{equation}\label{33c}
 \begin{split}
  \begin{cases}
 -x_0^* \in A_0^* \tilde x_1^* +  \partial _{x_0} h_0 (\bar x_0,\bar u_0, \bar w_0),\\
 \tilde{x}^*_{1} \in A_1^* \tilde{x}^*_2 +   \partial _{x_1} h_1 (\bar x_1,\bar u_1, \bar w_1),\\
 ...\\
   \tilde{x}^*_{N-1} \in A_{N-1}^* \tilde{x}^*_N +   \partial _{x_{N-1}} h_{N-1} (\bar x_{N-1},\bar u_{N-1}, \bar w_{N-1}),\\
   \tilde{x}^*_N \in \partial h_N(\bar x_N)
 \end{cases}
 \end{split}
  \end{equation}
 and 
 \begin{align}\label{33d}
  -B^*_k \tilde{x}^*_{k+1} \in  \partial _{u_k} h_k (\bar x_k, \bar u_k, \bar w_k)+u^*_k, \ k=0,1,...,N-1.
 \end{align} 
 Now we can derive from the second inclusion in \eqref{formulas_subdifferential2} and from \eqref{Tconju}, \eqref{33b}, the following
  \begin{align}
  \label{33e}
   \tilde{w}^*_k \in  \partial _{w_k} h_k (\bar x_k, \bar u_k, \bar w_k)+T^*_k \tilde{x}^*_{k+1}, \ k=0,1,...,N-1.
    \end{align}
  Combining \eqref{33c}--\eqref{33e}, we obtain \eqref{nondifferentiable}. The proof is complete.
  	$\hfill\Box$
   	\end{proof}
  
  	\begin{theorem}\label{Frechetcase}
  	Under the assumptions of Theorem \ref{Mainresult}, suppose additionally that the functions $h_k,$ for $ k=0,1,\ldots,N$, are Fr\'echet differentiable. Then, a vector  $\tilde w^*=(\tilde w_0^*,\tilde w_1^*,\ldots, \tilde w_{N-1}^*)\in W^*$ belongs to  $\partial V(\bar w)$ if and only if there exist $x_0^*\in N(\bar x_0; C)$, $\tilde x^*=(\tilde x_1^*, \tilde x_2^*,\ldots,\tilde x_N^*)\in {\widetilde{X}^*}$ and $u^*=(u_0^*, u_1^*,\ldots, u_{N-1}^*)\in N(\bar
    	   	  	   	  	   	 u;\Omega)$ such that
    	   	  	 \begin{eqnarray}\label{Frechet_case}
    	\left\{
    	  \begin{array}{ll} \tilde x_N^*=\nabla h_N(\bar
    	  x_N),\\ \tilde x_k^*=\nabla_{x_k}h_k(\bar x_k, \bar u_k, \bar
    	  w_k)+A_k^*\tilde x_{k+1}^*,\ k=1,2,\ldots, N-1,\\
    	  	x_0^*=-\nabla_{x_0}h_0(\bar x_0, \bar u_0, \bar
    	  	w_0)-A_0^*\tilde x_1^*,\\
    	  	u_k^*=-\nabla_{u_k}h_k(\bar x_k, \bar u_k, \bar
    	  	w_k)-B_k^*\tilde x_{k+1}^*,\ k=0,1,\ldots, N-1,\\
    	\tilde w_k^*=\nabla_{w_k}h_k(\bar x_k, \bar u_k, \bar
    	w_k)+T_k^* \tilde x_{k+1}^*, \ k=0,1,\ldots, N-1,\\
    	 \end{array}
    	\right.
    	\end{eqnarray}
    	where $\nabla h_N(\bar x_N),\nabla_{x_k}h_k(\bar x_k, \bar u_k, \bar
    	w_k)$, $\nabla_{u_k}h_k(\bar x_k, \bar u_k, \bar
    	w_k)$ and $\nabla_{w_k}h_k(\bar x_k, \bar u_k, \bar
    	w_k)$, respectively, stand for the Fr\'echet derivatives of $h_N(.),$ $ h_k(., \bar u_k,\bar w_k)$, $h_k(\bar x_k,., \bar
    	w_k)$ and $h_k(\bar x_k, \bar
    	u_k,.)$ at $\bar x_N, \bar
    	x_k$, $\bar u_k$, and $\bar w_k$. 
    	\end{theorem}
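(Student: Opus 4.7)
The plan is to re-run the proof of Theorem \ref{Mainresult} under the extra Fréchet differentiability hypothesis and promote all of its set inclusions to equalities, which automatically turns the one-sided conclusion \eqref{nondifferentiable} into the two-sided characterization \eqref{Frechet_case}. The starting point is again the identity
$\partial V(\bar w)= \bigcup_{(z^*, w^*)\in \partial f(\bar z, \bar w)}\bigcup_{v^*\in N(\bar z;K)}\big[w^*+ T^*((M^*)^{-1}(z^* +v^*))\big]$
furnished by Theorem \ref{asprogramingproblem}, whose hypotheses (closed range of $\Phi$, ${\rm ker}\,T^*\subset{\rm ker}\,M^*$, continuity of $f$ at $(\bar w,\bar z)$) are in force by assumption.

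First I would observe that Fréchet differentiability of each $h_k$ makes $f$ Fréchet differentiable at $(\bar z,\bar w)$, so $\partial f(\bar z,\bar w)$ collapses to the singleton $\{\nabla f(\bar z,\bar w)\}=\{(\nabla_z f(\bar z,\bar w),\nabla_w f(\bar z,\bar w))\}$. Consequently, the inclusion $\partial f(\bar z,\bar w)\subset\partial_z f(\bar z,\bar w)\times\partial_w f(\bar z,\bar w)$ used in the proof of Theorem \ref{Mainresult} is now an equality, and the Moreau--Rockafellar inclusion \eqref{newformula} and the block identifications \eqref{33a}, \eqref{33b} reduce to the exact evaluations $\nabla_z f(\bar z,\bar w)=\bigl(\nabla_{x_0}h_0,\ldots,\nabla_{x_{N-1}}h_{N-1},\nabla h_N,\nabla_{u_0}h_0,\ldots,\nabla_{u_{N-1}}h_{N-1}\bigr)$ and $\nabla_w f(\bar z,\bar w)=\bigl(\nabla_{w_0}h_0,\ldots,\nabla_{w_{N-1}}h_{N-1}\bigr)$, evaluated at the corresponding points $(\bar x_k,\bar u_k,\bar w_k)$.

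For the necessity (``only if'') direction I would copy the block-decomposition argument of Theorem \ref{Mainresult} verbatim: decompose any $v^*\in N(\bar z;K)$ as $v^*=(x_0^*,0,u^*)$ with $x_0^*\in N(\bar x_0;C)$ and $u^*=(u_0^*,\ldots,u_{N-1}^*)\in N(\bar u;\Omega)$, read the two conditions $M^*\tilde x^*=\nabla_z f(\bar z,\bar w)+v^*$ and $\tilde w^*=\nabla_w f(\bar z,\bar w)+T^*\tilde x^*$ row by row against the explicit matrix forms \eqref{Tconju}, \eqref{Mconju} of $T^*$ and $M^*$, and match the ensuing five identities with the five lines of \eqref{Frechet_case}. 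For the sufficiency (``if'') direction the argument reverses without any new ingredient: given $x_0^*,\tilde x^*,u^*$ satisfying \eqref{Frechet_case}, I stack the five identities back into the two matrix equations, set $v^*:=(x_0^*,0,u^*)\in N(\bar z;K)$, and read off that $\tilde w^*$ belongs to the right-hand side of the formula for $\partial V(\bar w)$ displayed above.

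No serious obstacle is anticipated; the only point that deserves explicit comment is that, unlike in Theorem \ref{Mainresult} where the passage from $\partial f$ to a product of partial subdifferentials was a strict inclusion and forced a one-sided conclusion, Fréchet differentiability makes every step of the chain a genuine equality, which is exactly what upgrades the necessary condition \eqref{nondifferentiable} to the equivalent characterization \eqref{Frechet_case}.
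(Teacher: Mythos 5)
Your proposal is correct and follows essentially the same route as the paper: invoke the exact formula of Theorem \ref{asprogramingproblem}, note that Fr\'echet differentiability collapses $\partial f(\bar z,\bar w)$ and all partial subdifferentials to singletons so that every inclusion in the proof of Theorem \ref{Mainresult} becomes an equality, and then read the two conditions $M^*\tilde x^*=\nabla_z f(\bar z,\bar w)+v^*$, $\tilde w^*=\nabla_w f(\bar z,\bar w)+T^*\tilde x^*$ blockwise against \eqref{Tconju} and \eqref{Mconju}. Your explicit reversal argument for the ``if'' direction is exactly what the paper compresses into the phrase ``by the proof of Theorem \ref{Mainresult}.''
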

\begin{proof}
It is well-known that if $\varphi : Y \to \overline{\Bbb{R}}$ is a convex function defined on a normed space $Y$ and $\varphi$ is Fr\'echet differentiable at $\bar y \in Y$, then $\partial \varphi (\bar y)=\{\nabla \varphi(\bar y)\}$ (see e.g. \cite [p.~197--198]{Ioffe_Tihomirov_1979}). Hence, since $h_k$, $k=0,1,...,N,$ are Fr\'echet differentiable by our assumptions, the inclusions in \eqref{formulas_subdifferential2}--\eqref{33b} become equalities. Namely, we have
	\begin{align*}
  	\begin{cases}
  	M^*\tilde{x}^* = \nabla _z f(\bar z, \bar w)+v_1^*\\
  	\tilde{w}^* =  \nabla_w f(\bar z, \bar w) +T^* \tilde{x}^*,
  	\end{cases}
  	\end{align*}
  	
  	 \begin{equation*}
  	\nabla_z f(\bar z, \bar w) =
   \sum\limits_{k=0}^{N} \big (\nabla_{x_k} \tilde{h}_k (\bar x_k, \bar u_k, \bar w_k), \nabla_{u_k} \tilde{h}_k (\bar x_k,\bar u_k, \bar w_k) \big ),
     \end{equation*}
  \begin{equation*}
    \begin{split}
     \nabla_z f(\bar z, \bar w)&= \big (\nabla_{x_0}h_0 (\bar x_0, \bar u_0, \bar w_0)\!,\!...\!, \! \nabla_{x_{N\!-\!1}}h_{N\!-\!1} (\bar x_{N\!-\!1}, \bar u_{N\!-\!1}, \bar w_{N\!-\!1}), \nabla h_N (\bar x_{N}), \\
     & \quad \quad  \nabla_{u_0}h_0 (\bar x_0, \bar u_0, \bar w_0),..., \nabla_{u_{N-1}}h_{N-1} (\bar x_{N-1}, \bar u_{N-1}, \bar w_{N-1})\big ),
     \end{split}
     \end{equation*}    
     and 
   \begin{equation*}
  	\begin{split}
  	\nabla_w f(\bar z, \bar w) = \big ({\nabla}_{w_0} h_0 (\bar x_0, \bar u_0, \bar w_0) ,...,{\nabla}_{w_{N-1}} h_{N-1} (\bar x_{N-1}, \bar u_{N-1}, \bar w_{N-1})\big ) .
  	\end{split}
  	\end{equation*}    
 Consequently, by the proof of Theorem \ref{Mainresult} we can conclude that  a vector  $\tilde w^*=(\tilde w_0^*,\tilde w_1^*,\ldots, \tilde w_{N-1}^*)\in W^*$ belongs to  $\partial V(\bar w)$ if and only if there exist $x_0^*\in N(\bar x_0; C)$, $\tilde x^*=(\tilde x_1^*, \tilde x_2^*,\ldots,\tilde x_N^*)\in {\widetilde{X}^*}$ and $u^*=(u_0^*, u_1^*,\ldots, u_{N-1}^*)\in N(\bar
     	   	  	   	  	   	 u;\Omega)$ such that \eqref {Frechet_case} is satisfied.     	   	  	   	  	   	 
$\hfill\Box$  	
\end{proof}
	
  	\begin{theorem}\label{Mainresult2}
  	Under the assumptions of Theorem \ref{Mainresult}, we have 
  	$$\partial^\infty V(\bar w)=\{0_{W^*}\}.$$
  	\end{theorem}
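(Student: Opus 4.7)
The plan is to derive $\partial^\infty V(\bar w) = \{0_{W^*}\}$ by combining the singular-subdifferential formula \eqref{BT1'} from Theorem~\ref{asprogramingproblem} with the algebraic structure of the adjoint operator $M^*$. Since the hypotheses of Theorem~\ref{Mainresult} are in force, $\Phi$ has closed range and $\mathrm{ker}\,T^*\subset\mathrm{ker}\,M^*$, and $f$ is continuous at $(\bar z,\bar w)$, so \eqref{BT1'} applies and gives
\begin{eqnarray*}
\partial^\infty V(\bar w)= \bigcup_{(z^*, w^*)\in \partial^\infty f(\bar z, \bar w)}\;\bigcup_{v^*\in N(\bar z;K)}\bigl[ w^*+ T^*\bigl((M^*)^{-1}(z^* +v^*)\bigr)\bigr].
\end{eqnarray*}

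Next I would reduce $\partial^\infty f(\bar z,\bar w)$ to the singleton $\{(0,0)\}$. Because each $h_k$ is a real-valued convex function that is continuous everywhere on its domain (which is the whole product space), the sum $f$ is finite and continuous on all of $Z\times W$, so $\mathrm{dom}\,f=Z\times W$. Using the identity $\partial^\infty f(x)=N(x;\mathrm{dom}\,f)$ recalled in Subsection~2.3, this yields $\partial^\infty f(\bar z,\bar w)=\{(0_{Z^*},0_{W^*})\}$. Plugging this into the displayed formula collapses it to
\[
\partial^\infty V(\bar w)=\bigcup_{v^*\in N(\bar z;K)} T^*\bigl((M^*)^{-1}(v^*)\bigr).
\]

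The final step, which is the substantive one, is to show that the right-hand side contains only $0_{W^*}$. I would write $v^*\in N(\bar z;K)$ in the explicit form $v^*=(x_0^*,0_{\widetilde X^*},u^*)$ with $x_0^*\in N(\bar x_0;C)$ and $u^*\in N(\bar u;\Omega)$, as done inside the proof of Theorem~\ref{Mainresult}. For any $\tilde x^*=(\tilde x_1^*,\dots,\tilde x_N^*)\in\widetilde X^*$ with $M^*\tilde x^*=v^*$, formula \eqref{Mconju} forces $\tilde x_N^*=0$ (from the entry corresponding to the coordinate $0_{\widetilde X^*}$ in the middle block indexed by $x_N$), and then the recursion $\tilde x_k^*=A_k^*\tilde x_{k+1}^*$ for $k=N-1,\dots,1$ obtained from the remaining entries of the middle block forces $\tilde x_k^*=0$ for every $k=1,\dots,N$. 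Consequently $T^*\tilde x^*=(T_0^*\tilde x_1^*,\dots,T_{N-1}^*\tilde x_N^*)=0_{W^*}$ by \eqref{Tconju}. Since the converse inclusion $0_{W^*}\in\partial^\infty V(\bar w)$ is obvious (take $v^*=0$ and $\tilde x^*=0$), the equality $\partial^\infty V(\bar w)=\{0_{W^*}\}$ follows.

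The main obstacle I anticipate is purely bookkeeping, namely making transparent how the lone zero sitting in the middle block of $v^*$ (at the $X_N^*$-coordinate) cascades backward through the adjoint state equation to annihilate every $\tilde x_k^*$. Notably, the hypothesis $\mathrm{ker}\,T^*\subset\mathrm{ker}\,M^*$ is not needed directly at this stage; it entered earlier to validate the use of Theorem~\ref{asprogramingproblem}, while the triangular cascade contained in the structure of $M^*$ does the real work here.
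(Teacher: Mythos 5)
Your proposal is correct and follows essentially the same route as the paper: apply \eqref{BT1'}, use $\mathrm{dom}\,f=Z\times W$ to get $\partial^\infty f(\bar z,\bar w)=\{(0,0)\}$, decompose $v^*\in N(\bar z;K)$ as $(x_0^*,0_{\widetilde X^*},u^*)$, and let the triangular structure of $M^*$ in \eqref{Mconju} force $\tilde x_N^*=0$ and, via the backward recursion $\tilde x_k^*=A_k^*\tilde x_{k+1}^*$, all $\tilde x_k^*=0$, whence $T^*\tilde x^*=0_{W^*}$. The paper records this as the system \eqref{singular_subdifferential} and concludes the same way, so there is nothing to add.
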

  	\begin{proof}
  Similarly as in the proof of Theorem \ref{Mainresult}, applying Theorem \ref{asprogramingproblem} to \eqref{BT1'}, we get
  			\begin{eqnarray}\label{BT1b}
  		  		\partial^\infty V(\bar w)= \bigcup_{(z^*, w^*)\in \partial^\infty f(\bar z, \bar w)}\;\bigcup_{v^*\in N(\bar z;K)}\big[ w^*+ T^*\big((M^*) ^{-1}(z^* +v^*)\big)\big].
  		  		\end{eqnarray}
  		Since ${\rm dom}\,f=Z\times W$ and $\partial^\infty f(\bar z, \bar w)= N((\bar z, \bar w); {\rm dom }\, f)$ by \cite[Proposition~4.2]{AnYen}, we have $\partial^\infty f(\bar z, \bar w)=\{(0_{Z^*},0_{W^*})\}$. Therefore, from \eqref{BT1b} it follows that
  	  	  \begin{eqnarray}\label{BT1b1}
  	  	  \partial^\infty V(\bar w)= \bigcup_{v^*\in N(\bar z;K)}\big[T^*\big((M^*) ^{-1}(v^*)\big)\big].
  	  	  \end{eqnarray}
  	  	    	  Thus, $\tilde{w}^* $ belongs to $ \partial^\infty V( \bar w)$ if and only if there exist $v^*\in N(\bar{z};K)$ and $\tilde x^*=(\tilde x_1^*, \tilde x_2^*,\ldots,\tilde x_N^*)\in {\widetilde{X}^*}$ such that
  	  	    	  \begin{align*}
   	  M^* \tilde x^*=v^*,	  	 \   	  \tilde w^*=T^* \tilde x^*.  	  	    	  \end{align*}
 As $v^*\in N(\bar z; K)=N(\bar x_0; C)\times\{0_{\tilde{X}^*}\}\times N(\bar u_0;\Omega_0)\times...\times N(\bar u_{N-1}; \Omega_{N-1}),$ we can find $x_0^*\in N(\bar x_0; C)$ and $ u^*=(u_0^*,
  	  	      	u_1^*,\ldots,u_{N-1}^*)$ with $u_k^*\in N(\bar u_k; \Omega_k)$ for
  	  	      	$k=0,1,\ldots,N-1$. So $v^*=(x_0^*,0_{\tilde{X}^*}, u_0^*,...,u^*_{N-1}).$  By \eqref{Tconju} and \eqref{Mconju}, we see that $\tilde w^*\in \partial^\infty V(\bar w)$ if and only if there exist $x_0^*\in N(\bar{x}_0;C)$  and $\tilde x^*=(\tilde x_1^*, \tilde x_2^*,\ldots,\tilde x_N^*)\in {\widetilde{X}^*}$, and $u^*=(u_0^*,u_1^*,...,u^*_{N-1})$ with $u_k^*\in N(\bar{u}_k; \Omega_k)$, $k=0,1,...,N-1,$ such that
  	  	      	\begin{align}\label{singular_subdifferential}
  	  	      	\begin{cases}  \tilde x^*_N=0,\\
  	  	      	\tilde x^*_k=A_k^* \tilde x^*_{k+1},\ k=1,\dots,N-1,\\
  	  	      	x_0^*=-A_0^* \tilde x^*_1,\\ 
  	  	      	u^*_k=-B^*_k \tilde x^*_{k+1},\ k=0,1,\dots,N-1,\\
  	  	      	w^*_k=T^*_k \tilde x^*_{k+1},\ k=0,1,\dots,N-1.
  	  	      	\end{cases}
  	  	      	\end{align}  	
  	  	      From \eqref{singular_subdifferential} we can easily deduce that $\partial ^\infty V(\bar w)=\{0_{W^*}\}.$
  	  	      	$\hfill\Box$
  	\end{proof}
    	
   	Let us describe a typical situation where the assumptions of Theorem \ref{Mainresult} are automatically satisfied.
 \begin{remark} If $T_0, T_1,...,T_{N-1}$ are surjective, then the operator  $T: W \rightarrow \widetilde{X}$ is surjective too. Hence ${\rm{ker}}\, T^*=\{0\},$ and, therefore, condition \rm{(i)} in Theorem \ref{Mainresult} is satisfied. Moreover, condition \rm{(ii)} of that theorem is also fulfilled, because $\rm{rge}\, \Phi=\widetilde{X}.$
 \end{remark} 	
  	
  \section{Applications}\markboth{\centerline{\it Main Results}} {\centerline{\it D. T. V.~An and N. T.~Toan
   	   		}} 	
  In this section we apply the obtained results to some examples. First, we give an auxiliary result related to a convex optimization problem under linear constraints.
  
  \medskip
  Let $X$ be a Banach space with the dual denoted by $X^*$. Consider the problem
  $$\min \big\{\varphi(x) \mid \langle a_i,x \rangle \le \alpha_i,\, \langle b_j,x \rangle = \beta_j, \, i=1,\dots,m, \, j=1,\dots,k \big\}, \ \ \ \ \ \quad  (P)$$
where $\varphi: X \to \Bbb{R}$ is a continuous convex function, $a_i, b_j \in X^*$, $i=1,\dots,m,\, j=1,\dots,k.$  

\medskip
Denote by $\Omega$ and Sol$\,(P)$, respectively, the constraint set and the solution set of $(P)$.

\medskip  
The following statement is a Farkas lemma for infinite dimensional vector spaces.
\begin{lemma}\label{Farkas_lemma}{\rm{(See \cite[Lemma~1]{Bartl_2008})}}
Let $W$ be a vector space over $\Bbb{R}.$ Let $A: W \rightarrow \Bbb{R}^m$ be a linear mapping and $\gamma : W \rightarrow \Bbb{R}$ be a linear functional. Suppose that $A$ is represented in the form $A=(\alpha_i)_i^m$, where each  $\alpha_i:W\to \Bbb{R}$ is a linear functional (i.e., 
for each $x\in W$, $A(x)$ is a column vector whose $i-th$ component is $\alpha_i(x)$, for $i=1,\dots,m$). Then, the inequality $\gamma(x) \le 0$ is a consequence of the inequalities system
$$\alpha_1(x) \le 0, \ \alpha_2(x) \le 0,\dots, \ \alpha_m(x) \le 0$$
if and only if there exist nonnegative real numbers $\lambda_1, \lambda_2,\dots,\lambda_m \ge 0$ such that
$$\gamma=\lambda_1\alpha_1+\dots+\lambda_m \alpha_m.$$
\end{lemma}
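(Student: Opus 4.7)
The plan is to reduce the statement to the classical finite-dimensional Farkas lemma by factoring the functional $\gamma$ through the map $A$. The sufficiency direction is immediate: if $\gamma=\lambda_1\alpha_1+\cdots+\lambda_m\alpha_m$ with each $\lambda_i\ge 0$, then $\alpha_i(x)\le 0$ for every $i$ forces $\gamma(x)=\sum_i\lambda_i\alpha_i(x)\le 0$. So the real content lies in the necessity direction, and the only infinite-dimensional feature is the domain $W$; the range $A(W)$ sits inside $\mathbb{R}^m$, which is exactly what allows the reduction.

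First I would verify that $\gamma$ is constant on the fibers of $A$. If $A(x)=A(x')$ then $\alpha_i(x-x')=0$ for every $i$, so both $\alpha_i(x-x')\le 0$ and $\alpha_i(x'-x)\le 0$. Applying the hypothesis separately to $x-x'$ and $x'-x$ yields $\gamma(x-x')\le 0$ and $\gamma(x'-x)\le 0$, hence $\gamma(x)=\gamma(x')$. Therefore the formula $\tilde\gamma(A(x)):=\gamma(x)$ well-defines a linear functional $\tilde\gamma$ on the (necessarily finite-dimensional) linear subspace $V:=A(W)\subseteq\mathbb{R}^m$.

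Next I would transfer the hypothesis to $\tilde\gamma$: for any $v=A(x)\in V$ with $v_i\le 0$ for every $i$, i.e.\ $\alpha_i(x)\le 0$ for every $i$, the hypothesis yields $\tilde\gamma(v)=\gamma(x)\le 0$. At this point $V$ is a subspace of $\mathbb{R}^m$ and $\tilde\gamma$ is a linear functional on $V$ that is nonpositive on $V\cap(-\mathbb{R}^m_+)$, so I would invoke the classical Farkas lemma in finite dimensions to produce coefficients $\lambda_1,\ldots,\lambda_m\ge 0$ with $\tilde\gamma(v)=\sum_{i=1}^m\lambda_i v_i$ for every $v\in V$. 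Substituting $v=A(x)$ then gives $\gamma(x)=\sum_i\lambda_i\alpha_i(x)$ for all $x\in W$, as required.

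The main obstacle is the final step: $\tilde\gamma$ is defined only on the subspace $V$ and satisfies the sign condition only on $V\cap(-\mathbb{R}^m_+)$, yet we need nonnegative coefficients that represent it through all of $\mathbb{R}^m$. This is exactly the content of the classical finite-dimensional Farkas lemma, and can be proved by a standard separation argument for the polyhedral cone $\mathbb{R}^m_+$ (or by a direct induction on $m$). The infinite-dimensionality of $W$ enters only through the well-definedness check for $\tilde\gamma$ and plays no role thereafter.
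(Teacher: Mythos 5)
Your argument is correct, but note that the paper itself contains no proof of this lemma: it is quoted from Bartl's article, whose proof is a short, self-contained algebraic induction on $m$ carried out directly in the general vector space $W$ (and valid even over an arbitrary linearly ordered field). Your route is genuinely different: you factor $\gamma$ through the finite-dimensional image $V=A(W)\subseteq\mathbb{R}^m$ --- the fiber-constancy check, obtained by applying the hypothesis to $\pm(x-x')$, is exactly the right observation and is the only place where the infinite-dimensionality of $W$ enters --- and then you invoke the classical finite-dimensional Farkas lemma on $V$ with the restricted coordinate functionals $\beta_i(v)=v_i$ and the induced functional $\tilde\gamma$. That last step is indeed just the classical result; equivalently, the cone generated by $\beta_1,\dots,\beta_m$ in $V^*$ is finitely generated, hence closed, so the separation argument you allude to goes through with no closure subtleties, and substituting $v=A(x)$ recovers $\gamma=\sum_i\lambda_i\alpha_i$ on all of $W$. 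In comparison: your reduction is shorter and conceptually transparent if one takes the $\mathbb{R}^d$ Farkas lemma as known, while Bartl's argument is self-contained, purely algebraic (no separation or topology), and more general, which is presumably why the authors simply cite it.
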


Based on Lemma \ref{Farkas_lemma} and a standard Fermat rule for convex programs, one can obtain the next proposition on necessary and sufficient optimality conditions for $(P)$, which is very useful for dealing with \eqref{problem1}--\eqref{problem4} when $C$ and $\Omega_i, \, i=0,1,...,N-1$, are polyhedral convex sets. For clarity of our presentation, we provide here a detailed proof of this result.
\begin{proposition} \label{Pro_Auxilary} 
For a point $\bar x \in \Omega$ to be a solution of $(P)$, it is necessary and sufficient, that there exist $\lambda_i \ge 0,$ $i=1,\dots,m$ and $\mu_j \in \Bbb{R},$ $j=1,\dots,k,$ such that
\\ {\rm {(a)}} $0 \in \partial \varphi (\bar x) + \sum\limits_{i=1}^m \lambda_i a_i+\sum\limits_{j=1}^k \mu_j b_j ;$
\\ {\rm {(b)}} $\lambda_i (\langle a_i,\bar x \rangle -\alpha_i)=0,$ $i=1,\dots,m.$
\end{proposition}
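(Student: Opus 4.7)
The plan is to combine the classical Fermat rule for convex programs with the Moreau--Rockafellar sum rule in order to reduce the analysis to the normal cone $N(\bar x;\Omega)$, and then to compute that normal cone via the Farkas lemma (Lemma~\ref{Farkas_lemma}). Specifically, $\bar x$ solves $(P)$ iff it minimizes $\varphi+\delta(\cdot;\Omega)$ on $X$, which by the Fermat rule is equivalent to $0\in\partial(\varphi+\delta(\cdot;\Omega))(\bar x)$. Since $\varphi$ is continuous on all of $X$, the Moreau--Rockafellar theorem (see \cite[p.~48]{Ioffe_Tihomirov_1979}) gives $\partial(\varphi+\delta(\cdot;\Omega))(\bar x)=\partial\varphi(\bar x)+N(\bar x;\Omega)$, so the task is reduced to describing $N(\bar x;\Omega)$ explicitly in terms of the data.

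For the normal cone, let $I(\bar x):=\{i\mid \langle a_i,\bar x\rangle=\alpha_i\}$ be the active index set and introduce the cone $K:=\{y\in X\mid \langle a_i,y\rangle\le 0\ \forall i\in I(\bar x),\ \langle b_j,y\rangle=0\ \forall j\}$. The step I would establish is
\[ x^*\in N(\bar x;\Omega)\iff \langle x^*,y\rangle\le 0\text{ for every }y\in K. \]
The ``$\Leftarrow$'' direction is immediate from $\Omega-\bar x\subset K$; the ``$\Rightarrow$'' direction uses the observation that for every $y\in K$ and every sufficiently small $t>0$ one has $\bar x+ty\in\Omega$ (active inequalities are preserved because $\langle a_i,y\rangle\le 0$, inactive ones by strict slack, and the equalities because $\langle b_j,y\rangle=0$), so dividing $\langle x^*,ty\rangle\le 0$ by $t$ gives the claim. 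Once this reduction is available I would apply Lemma~\ref{Farkas_lemma} on $W=X$ with $\gamma=\langle x^*,\cdot\rangle$ and with the linear functionals $\langle a_i,\cdot\rangle$ for $i\in I(\bar x)$ together with $\pm\langle b_j,\cdot\rangle$ for $j=1,\dots,k$ (so each equality is encoded as two opposite inequalities). Farkas then produces $\lambda_i\ge 0$ ($i\in I(\bar x)$) and $\mu_j^+,\mu_j^-\ge 0$ with $x^*=\sum_{i\in I(\bar x)}\lambda_i a_i+\sum_{j}(\mu_j^+-\mu_j^-)b_j$; setting $\mu_j:=\mu_j^+-\mu_j^-\in\Bbb R$ and extending by $\lambda_i:=0$ for $i\notin I(\bar x)$ makes the complementarity condition (b) automatic and gives the representation appearing in (a).

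Combining the two ingredients yields the necessity of (a) and (b). For sufficiency, given multipliers satisfying (a) and (b), I would verify directly from \eqref{normals_convex_analysis} that $\sum_i\lambda_i a_i+\sum_j\mu_j b_j\in N(\bar x;\Omega)$, using $\lambda_i\ge 0$, the feasibility inequalities $\langle a_i,x\rangle\le\alpha_i$, the identities $\langle b_j,x\rangle=\beta_j=\langle b_j,\bar x\rangle$ for $x\in\Omega$, and condition (b) to substitute $\alpha_i$ for $\langle a_i,\bar x\rangle$ whenever $\lambda_i>0$; then (a) reads $0\in\partial\varphi(\bar x)+N(\bar x;\Omega)$, which by the Fermat/Moreau--Rockafellar reduction of the first paragraph proves optimality of $\bar x$. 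The only nonroutine step is the cone reduction $x^*\in N(\bar x;\Omega)\Leftrightarrow \langle x^*,\cdot\rangle\le 0$ on $K$; once that is in place, Lemma~\ref{Farkas_lemma} delivers the multipliers algebraically, and the complementarity in (b) is precisely the statement that Farkas multipliers attached to inactive constraints can be chosen equal to zero.
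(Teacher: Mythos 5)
Your proposal is correct and follows essentially the same route as the paper: reduce optimality to $0\in\partial\varphi(\bar x)+N(\bar x;\Omega)$ (the paper cites the Fermat-type rule from Ioffe--Tihomirov directly, while you pass through the indicator function and Moreau--Rockafellar, which is the same step since $\varphi$ is continuous), compute $N(\bar x;\Omega)$ by the perturbation argument $\bar x+tv\in\Omega$ for small $t>0$ plus Lemma~\ref{Farkas_lemma} with each equality split into two inequalities, and check the reverse inclusion directly, with $\lambda_i=0$ on inactive indices giving the complementarity condition (b). No gaps.
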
 

\begin{proof}
Let $\bar x \in \Omega$ be given arbitrary. Note that $(P)$ can be written in the form
$$\min \big\{\varphi(x)\mid x \in \Omega\big\}.$$
By  \cite[Proposition 2, p.~81]{Ioffe_Tihomirov_1979}, $\bar x\in $ Sol$\,(P)$ if and only if 
\begin{align}\label{Fermat}
0\in \partial \varphi (\bar x) +N(\bar x; \Omega).
\end{align}
We now show that 
\begin{equation}\label{cone_normal}
\begin{split}
&N(\bar x; \Omega)\\
&=\left \{\sum\limits_{i\in I(\bar x) } \lambda_i a_i+\sum\limits_{j=1}^k \mu_j b_j\!\mid \!\, \lambda_i \ge 0, \, i=1,...,m, \ \mu_j \in \Bbb{R}, \, j=1,...,k\right \},
\end{split}
\end{equation}
where $I(\bar x)=\{i \mid \langle a_i, \bar x \rangle=\alpha_i, \, i=1,\dots,m\}.$

Take any $x^* \in N(\bar x; \Omega).$ Let
$v \in X$ be such that $\langle a_i, v \rangle \le 0$ for $i \in I(\bar x)$, $\langle b_j, v \rangle \le 0$ and $\langle -b_j, v \rangle \le 0$  for $j=1,\dots,k$. If $t>0$ is chosen small enough, then 
$$\langle a_i, \bar x +tv \rangle =\langle a_i, \bar x \rangle + t \langle a_i, v \rangle \le \alpha_i,\ \;  i=1,\dots,m,$$
and 
$$\langle b_j, \bar x +tv \rangle =\beta_j,\ \; j=1,\dots,k.$$
Thus $\bar x +tv \in \Omega$; so we have
$$0 \ge \langle x^*, (\bar x +tv)-\bar x \rangle=t \langle x^*, v \rangle.$$
It follows that, $\langle x^*, v \rangle \le 0.$ Hence, the inequality $\langle x^*, v \rangle \le 0$ is a consequence of the inequalities system 
$$
\begin{cases}
\langle a_i, v \rangle \le 0, \ i \in I(\bar x),\\
\langle b_j, v \rangle \le 0, \ j=1,\dots,k,\\
\langle -b_j, v \rangle \le 0,\ j=1,\dots,k.
\end{cases}
$$
By Lemma \ref{Farkas_lemma}, there exist $\lambda_i \ge 0, \, i \in I(\bar x)$, $\mu_j^1 \ge 0$ and $\mu_j^2 \ge 0$, $j=1,\dots,k$, such that
$$ x^*=\sum\limits_{i \in I(\bar x)} \lambda_i a_i +\sum\limits_{j=1}^k \mu_j^1 b_j +\sum\limits_{j=1}^k \mu_j^2(-b_j).$$
Setting $\mu_j:=\mu_j^1-\mu_j^2,$ from the last equality, we can deduce that $x^*$ belongs to the right-hand-side of \eqref{cone_normal}.

Now, suppose $x^*= \sum\limits_{i\in I(\bar x)} \lambda_i a_i +\sum\limits_{j=1}^k \mu_j b_j,$ where $\lambda_i \ge 0$, for every $i \in I(\bar x)$, $\mu_j \in \Bbb{R}$, for $j=1,\dots,k$.
Given any $x \in \Omega$, we have 
$$
\langle a_i, x -\bar x \rangle = \langle a_i, x \rangle -\langle a_i, \bar x \rangle \le \alpha_i-\alpha_i=0, \ \; i \in I(\bar x).
$$
Therefore 
\begin{align*}
\langle x^*, x - \bar x \rangle &= \left\langle 
\sum\limits_{i\in I(\bar x) } \lambda_i a_i +\sum\limits_{j=1}^k \mu_j b_j, x- \bar x  \right\rangle \\
&= \sum\limits_{i \in I(\bar x)} \lambda_i \langle a_i, x- \bar x \rangle\\
& \le 0.
\end{align*}
It follows that $x^* \in N(\bar x; \Omega)$.
Combining \eqref{Fermat} and \eqref{cone_normal}, we obtain the assertion of the proposition. 	$\hfill\Box$
\end{proof}

  The next example is designed to show how Theorem \ref{Frechetcase} can work for parametric optimal control problems with differentiable objective functions.

  \begin{example} Let $N=1$, $X_0=\Bbb{R}$, $X_1=\Bbb{R}$, $U_0=\Bbb{R},$ $W_0=\Bbb{R},$ $\Omega_0=[-1,+\infty)$ and $C=(-\infty, 2]$. Let $A_0: X_0\to X_1$, $B_0: U_0\to X_1$, $T_0: W_0\to X_1$ be defined by $A_0x_0=x_0$, $B_0u_0=-u_0$, and $T_0w_0=2w_0$. Furthermore, let 
  $h_0: X_0\times U_0 \times W_0 \to \Bbb{R}$ and $h_1: X_1 \to \Bbb{R}$ be given, respectively, by
 \begin{align*}
 &h_0(x_0,u_0,w_0)=x_0^2+x_0u_0+u_0^2 +\dfrac{1}{2}w_0,\\
 &h_1(x_1)=(x_1+1)^2.
 \end{align*}
 Consider the control problem \eqref{problem1}--\eqref{problem4} and choose $\bar w=0$ belonging to $W=W_0=\Bbb{R}.$ Then, the problem \eqref{problem1}--\eqref{problem4} becomes
 $$
  \begin{cases}
 f(x,u,\bar w)=x_0^2+x_0u_0+u_0^2+(x_1+1)^2 \to \rm{inf},\\
 x_1=x_0-u_0,\\
 x_0\le 2,\\
 u_0\ge -1.
 \end{cases}
 \quad \quad \quad \quad \quad \quad  \quad \quad (P_1)
 $$
 Using Proposition \ref{Pro_Auxilary}, it is easy to show that $(\bar x_0,\bar x_1,\bar u_0)=
 \left(-\dfrac{2}{5},-\dfrac{4}{5},\dfrac{2}{5}\right)$ is the unique solution of $(P_1)$. Hence $\bar x=(\bar x_0, \bar x_1)=\left(-\dfrac{2}{5},-\dfrac{4}{5}\right)$ and $\bar u=\bar u_0=\dfrac{2}{5},$ we have
 $(\bar x, \bar u) \in S(\bar w)$. 
 
  Clearly, the mapping $\Phi: \Bbb{R}\times \Bbb{R}^3 \to \Bbb{R}$ given by 
  $$\Phi(w,z)=Mz-Tw=-x_0+x_1+u_0-2w_0$$
   has closed range and ${\rm{ker}}\, T^*= {\rm{ker}}\, M^*=\{0\}$. Hence, by Theorem \ref{Frechetcase}, $w^*_0\in \partial V(\bar w)$ if and only if there exist $x_0^*\in N(\bar x_0; C)$, $\tilde x_1^*\in \Bbb{R}$, and $u_0^*\in N(\bar u_0;\Omega_0)$ such that
  \begin{eqnarray}\label{exampleFrechet}
     	\left\{
     	  \begin{array}{ll} \tilde x_1^*=\nabla h_1(\bar
     	  x_1),\\ 
     	   x_0^*=-\nabla_{x_0}h_0(\bar x_0, \bar u_0, \bar
     	  	w_0)-A_0^*\tilde x_1^*,\\
     	  	u_0^*=-\nabla_{u_0}h_0(\bar x_0, \bar u_0, \bar
     	  	w_0)-B_0^*\tilde x_{1}^*,\\
     	 w_0^*=\nabla_{w_0}h_0(\bar x_0, \bar u_0, \bar
     	w_0)+T_0^* \tilde x_{1}^*.\\
     	 \end{array}
     	\right.
     	\end{eqnarray}
   We have
    \begin{align*}
     & \nabla h_1(\bar x_1)=\nabla h_1\left( -\dfrac{4}{5}\right)=\dfrac{2}{5} ,\\
    & \nabla_{x_0}h_0(\bar x_0, \bar u_0, \bar
   	w_0)=\nabla_{x_0}h_0\left(-\dfrac{2}{5}, \dfrac{2}{5}, 0\right) =-\dfrac{2}{5} ,\\
      	& \nabla_{u_0}h_0(\bar x_0, \bar u_0, \bar 	w_0)=\nabla_{u_0}h_0\left(-\dfrac{2}{5}, \dfrac{2}{5}, 0\right) = \dfrac{2}{5},\\
      		& \nabla_{w_0}h_0(\bar x_0, \bar u_0, \bar
      	   	w_0)=\nabla_{w_0}h_0 \left(-\dfrac{2}{5}, \dfrac{2}{5}, 0\right)=\dfrac{1}{2},
   	\end{align*}
   $N(\bar x_0; C)=N\left(-\dfrac{2}{5};(-\infty,2] \right)=\{0\}$, $N(\bar u_0; \Omega_0)=N\left(\dfrac{2}{5};[-1,+\infty)\right)=\{0\}$, $A^*_0=1,$ $ B^*_0=-1,$ and $ T^*_0=2.$
   Thus, from \eqref{exampleFrechet} we have $ \tilde x_1^*=\dfrac{2}{5}, \, x_0^*=0, \, u_0^*=0, \, w^*_0=\dfrac{13}{10}. $ Hence $\partial V(\bar w)=\left\{\dfrac{13}{10} \right\}.$
   
  \end{example}

  Next, we give an example to illustrate the result of Theorem \ref{Mainresult}, where $h_0,...,h_N$ are not required to be differentiable.
  	 \begin{example}\rm Choose $N=2$, $X_0=X_1=X_2=\Bbb{R},$ 
  	 	$ U_0=U_1=\Bbb{R},$  $W_0=W_1=\Bbb{R}$, $C=(-\infty, 1]$, and $\Omega_0=\Omega_1=\Bbb{R}$. 
 Let $A_0: X_0\to X_1,$ $B_0:U_0\to X_1$, $T_0: W_0\to X_1$, $A_1: X_1\to X_2$, $B_1: U_1\to X_2$ and $T_1: W_1 \to X_2$ be given by $A_0x_0=-x_0$, $B_0u_0=0$, $T_0w_0=-w_0$, $A_1x_1=x_1$, $B_1u_1=-u_1$, and $T_1w_1=w_1$. Furthermore, define $h_0:X_0\times U_0\times W_0\to \Bbb{R}$, $h_1: X_1\times U_1 \times W_1 \to \Bbb{R}$, and $h_2: X_2 \to \Bbb{R}$ by
  	 	 	\begin{align*}
  	 &	h_0(x_0,u_0,w_0)=(x_0+u_0)^2 +\dfrac{1}{2}w_0^2,\\
  	 &	h_1(x_1,u_1,w_1)=|x_1-1|+|w_1|,\\
  	 &	h_2(x_2)=|x_2|.
  	 	\end{align*}
Then, at the parameter $\bar w=(\bar w_0, \bar w_1)=(0,0)$, the problem \eqref{problem1}--\eqref{problem4} in the following
 $$
  \begin{cases}
 f(x,u,\bar w)=(x_0+u_0)^2+|x_1-1|+|x_2| \to \rm{inf},\\
 x_1=-x_0,\\
 x_2=x_1-u_1,\\
 x_0\le 1.\\
  \end{cases}
 \quad \quad \quad \quad \quad \quad  \quad \quad (P_2)
 $$
  Using Proposition \ref{Pro_Auxilary}, it is not difficult to see that $S(\bar w)=\{\bar z\}$ where $\bar z=(\bar x_0, \bar x_1,\bar x_2, \bar u_0, \bar u_1)=(-1,1,0,1,1)$.
Moreover, the linear operator $\Phi: \Bbb{R}^2\times \Bbb{R}^3 \times \Bbb{R}^2\rightarrow \Bbb{R}^2$,
  	 	$$\Phi(w,z)=Mz-Tw=\left(
  	 	\begin{array}{llll}
  	 	x_0+x_1+w_0\\
  	 	-x_1+x_2+u_1-w_1\\
  	 	 	\end{array}
  	 	\right),
  	 	$$
  	 	has closed range. The assumption ${\rm ker}\, T^* \subset {\rm ker}\,M^*$ is satisfied, because ${\rm ker}\, T^* = {\rm ker}\,M^*=\{(0,0)\}$. Hence, by Theorem \ref{Mainresult}, if $w^*=(w^*_0, w^*_1)\in \partial V(\bar w)$ then there exist $x_0^* \in N(\bar x_0; C)$, $\tilde{x}^*=(\tilde{x}_1^*, \tilde{x}_2^*)$, and $u^*=(u^*_0,u^*_1)\in N(\bar u; \Omega)$
such that \eqref{nondifferentiable} is satisfied.
It is clear that
  	 	\begin{align*}
  	 	&\partial h_2(\bar x_2)=\partial h_2(0)=[-1,1],\\
  	 	&  \partial_{x_0} h_0(\bar x_0, \bar u_0, \bar w_0)= \partial_{x_0}h_0(-1,1,0)=\{0\},\\
  	 		&  \partial_{x_1} h_1(\bar x_1, \bar u_1, \bar w_1)= \partial_{x_1}h_1(1,1,0)=[-1,1],\\
  	 			&  \partial_{u_0} h_0(\bar x_0, \bar u_0, \bar w_0)= \partial_{u_0}h_0(-1,1,0)=\{0\},\\
  	 	&  \partial_{u_1} h_1(\bar x_1, \bar u_1, \bar w_1)= \partial_{u_1}h_1(1,1,0)=\{0\},\\
  	 		&  \partial_{w_0} h_0(\bar x_0, \bar u_0, \bar w_0)= \partial_{w_0}h_0(-1,1,0)=\{0\},\\
  	 	&  \partial_{w_1} h_1(\bar x_1, \bar u_1, \bar w_1)= \partial_{w_1}h_1(1,1,0)=[-1,1].	
  	 	\end{align*}
 In addition, we have $A_0^*=-1,$ $ A^*_1=1,$ $B_0^*=0,$ $ B_1^*=-1$, $T_0^*=-1$, and $T_1^*=1.$ Therefore, \eqref{nondifferentiable} yields $\tilde{x}^*_2\in [-1,1]$, $ \tilde{x}^*_1\in [-2,2]$, $x^*_0\in [-2,2]$, $u^*_0=0, $ $u^*_1\in [-1,1]$. Combining these with the conditions $x_0^* \in N(\bar x_0; C)=\{0\}$, $u^*=(u^*_0,u^*_1)\in N(\bar u; \Omega)=\{(0,0)\}$, we obtain $\tilde{x}^*_2\in [-1,1]$, $ \tilde{x}^*_1\in [-2,2]$, $x^*_0=0$, $u^*_0=u^*_1=0$.
 Thus, the last $N$ inclusions of \eqref{nondifferentiable} imply
 $$
      {w}^*_0\in[-2,2], \,
    w^*_1 \in [-2,2].
 $$
 Therefore $\partial V(\bar w)\subset [-2,2]\times [-2,2].$ 
  	 \end{example}
  	 
 	\begin{acknowledgements}
  	The research of Duong Thi Viet An was supported by College of Sciences, Thai Nguyen University, Vietnam. The research of Nguyen Thi Toan was supported by the National Foundation for Science and Technology Development (Vietnam) under grant number 101.01-2015.04. The authors thank Prof.~Nguyen Dong Yen for useful discussions and the anonymous referees for valuable remarks.
  	\end{acknowledgements}

\end{document}